\theoremstyle{plain}
\newtheorem{conjecture}{Conjecture}[section]
\newtheorem{corollary}{Corollary}[section]
\newtheorem{lemma}{Lemma}[section]
\newtheorem{theorem}{Theorem}[section]
\newtheorem*{theorem*}{Theorem}
\newtheorem{remark}{Remark}[section]
\crefname{conjecture}{Conjecture}{Conjectures}
\crefname{theorem}{Theorem}{Theorems}
\crefname{theorem*}{Theorem}{Theorems}
\crefname{corollary}{Corollary}{Corollaries}
\crefname{lemma}{Lemma}{Lemmas}
\crefname{proposition}{Proposition}{Propositions}
\crefname{remark}{Remark}{Remarks}
\crefname{note}{Note}{Notes}
\crefname{definition}{Definition}{Definitions}
\crefname{notation}{Notation}{Notations}
\crefname{example}{Example}{Examples}
\crefname{question}{Question}{Questions}
\crefname{section}{\S}{Sections}
\crefname{equation}{Equation}{Equations}
\newcommand{\floor}[1]{\left\lfloor #1 \right\rfloor}
\newcommand{\CTerm}{\mathcal{C}}
\newcommand{\HW}{\operatorname{HW}}
\newcommand{\Z}{\mathbb{Z}}
\title{Elementary closed-forms for non-trivial divisors}
\author{Mihai Prunescu \footnote{Research Center for Logic, Optimization and Security (LOS), Faculty of Mathematics and Computer Science, University of Bucharest, Academiei 14, Bucharest (RO-010014), Romania; Institute of Logic and Data Science, Bucharest, Romania; Simion Stoilow Institute of Mathematics of the Romanian Academy, Research unit 5, P. O. Box 1-764, Bucharest (RO-014700), Romania. E-mail: {\tt mihai.prunescu@imar.ro}, {\tt mihai.prunescu@gmail.com}.},
Joseph M. Shunia \footnote{Independent researcher, Ann Arbor, Michigan, United States, e-mail address: {\tt jshunia@gmail.com}.}
}
\date{October 2025}
\begin{document}

\maketitle

\begin{abstract} \noindent
We present several elementary closed-forms that express a non-trivial divisor for every composite integer $n > 1$. Each closed-form consists of a fixed number of elementary arithmetic operations drawn from the set:
addition, subtraction, multiplication, integer division, and exponentiation.
\\[2mm]
\noindent Two families of closed-forms are developed. First, direct application of the hypercube method yields closed-forms $T_1(n)$, $T_2(n)$, $T_3(n)$, and $T_4(n)$ expressing the smallest prime divisor,
largest non-trivial divisor, largest prime divisor, and greatest prime $\leq n$, respectively. The factorial-unwinding technique underlying these hypercube constructions leads to extreme symbolic complexity, motivating our main result: An alternative closed-form $T(n)$ that avoids factorial-unwinding by synthesizing the quadratic residue invariants $\chi(n)$ (largest $r$ such that $r^2$ is a divisor) and $\omega(n)$ (number of distinct prime divisors) with integer root extraction.
\\[2mm]
\noindent Although evaluating these closed-forms requires exponential time, the number of arithmetic operations performed remains constant and independent of the input size $n$. This sharply contrasts with traditional algorithmic methods, where the number of operations required to locate a non-trivial divisor necessarily scales with $n$.
\\[2mm]
{\bf 2020 Mathematics Subject Classification:} 11A51 (primary), 11A25 (secondary). \\[2mm]
{\bf Keywords:} integer factorization, Kalmar function, arithmetic term, semiprime, squarefree integer, nonsquarefree integer.
\end{abstract}

\section{Introduction}
The \textbf{integer factorization problem} is the computational task of decomposing a given natural number $n$ into its prime factors. Formally, given an integer $n > 1$, the objective is to find a set of primes $\{ p_1, p_2, \dots, p_k \}$ such that $n = p_1^{e_1} \cdot p_2^{e_2} \cdots p_k^{e_k}$, where $e_i \geq 1$ are integer exponents. This problem is fundamental in computational number theory and has broad implications in fields such as cryptography, where the security of widely used protocols like RSA is based on the difficulty of factorizing large \textbf{semiprimes} \cite{rivest1978rsa}, natural numbers $n=pq$ that are the product of two prime numbers.

The problem is often presented in the following way: \textit{Given a natural number $n$, find a proper divisor $d$ with $1 < d < n$, respectively decide that such a divisor does not exist.}

If such an algorithm is applied for the numbers $d$ and $n/d$, and the procedure is repeated  as far as possible, one obtains a binary tree of results. Its terminal nodes are then the prime divisors of $n$, with the given multiplicity, and the factorization is done.

At this point it is worth to remark that a fast decision (polynomial time) about the primality of $n$ is possible by the AKS algorithm \cite{AKS, AKSErrata, granvilleprimes}, but we don't know how to efficiently find a non-trivial divisor of $n$, when $n$ is not a prime number, by a fast deterministic algorithm. On the other hand, Shor's quantum factorization algorithm, see \cite{shor}, promises to find a proper divisor $d$ by constructing and running a quantum circuit of polynomial complexity.

\textbf{Arithmetic terms} are defined in \cite{prunescu2024factorial, prunescu2024numbertheoreticfunctions} as functions  constructed in the language
\begin{align*}
L = \{ +, \dot{-}, \ast, /, \textup{\textasciicircum} \} ,
\end{align*}
by the usual construction rules of term construction in the first-order logic. The constants $0$, $1$ and the variables are arithmetic terms. Further, if $t_1$ and $t_2$ are arithmetic terms, then the following expressions are arithmetic terms: $(t_1 + t_2)$, $(t_1 \dot{-} t_2)$, $(t_1 \ast t_2)$, $(t_1 / t_2)$ and $2^{t_1}$ are arithmetic terms.
Here ($/$) denotes the quotient obtained by the integer division and the \textbf{monus} operator ($\dot{-}$) denotes bounded subtraction: $t_1 \dot{-} t_2 = \max(t_1 - t_2, 0)$. In this article, the notation $\dot{-}$ will be used only if it is important to distinguish this operation from the normal subtraction.

Put simply: Arithmetic terms are \textbf{fixed-length elementary closed-form expressions} and might informally be referred to as \textbf{formulas}.

We present the first arithmetic terms that return a non-trivial divisor for every composite integer $n > 1$. Previous work has explored arithmetic terms for factoring semiprimes. For instance, in \cite{shunia2024elementary} Shunia presented an arithmetic for factoring semiprimes by direct application of a term for $\floor{\sqrt{n}}$. A semiprime factoring term was also done independently by Prunescu and Sauras-Altuzarra in \cite{prunescu2024numbertheoreticfunctions}, who instead utilized a term for Euler's totient function $\varphi(n)$ to obtain factors. It could be said that our general purpose factoring terms represent the logical conclusion of these earlier semiprime-only factoring terms.

\begin{remark}
Our results also extend prior work by Shamir \cite{shamir1979factoring} (the same Shamir who contributed the ``S'' in RSA \cite{rivest1978rsa}), who gave an algorithmic method for factoring integers using $O_{\textup{ops}}(\log n)$ arithmetic operations, where each exponentiation $n^m$ is counted as a single operation. Under this same accounting, our closed-form expressions represent only $O_{\textup{ops}}(1)$ operations, and thus achieve the minimal possible count. However, this does not imply practical efficiency: The symbolic evaluation of our expressions incurs exponential time complexity $O(2^n)$ due to subterms whose bit complexity grows exponentially in the input size.
\end{remark}

\subsection{Structure of the paper}
\begin{enumerate}
    \item In \cref{sectionpreliminaries} we recall the hypercube method. Given an exponential Diophantine equation in unknowns $(x_1, \dots,x_k)$, say $E(\vec n, \vec x) = 0$, a box $[0,t]^k$ and a function $u(\vec n, t)$ such that for all $\vec x \in [0,t]^k$ and all $\vec n \in N^m$ one has:
    $$0 \leq E(\vec n, \vec x) < 2^{u(\vec n, t)},$$
    it produces an arithmetic term $t(\vec n, t, u(\vec n, t))$ that counts the number of solutions $\vec x \in [0,t]^k$ for the exponential Diophantine equation $E(\vec n, \vec x) = 0$. Also  an arithmetic term expressing the function $n!$ is recalled. Moreover, the hypercube method needs arithmetic terms expressing the function $\nu_2(n)$, meaning the value of the $2$-ary valuation in $n$, respectively $\gcd(m,n)$, the greatest common divisor of the natural numbers $m$ and $n$. These arithmetic terms are recalled as well. They will play a role also by themselves, independently of the hypercube method.
    \item In \cref{sectiondirectapplications} we show how direct applications of the hypercube method yield valid arithmetic terms for factoring: $T_1(n)$ computing the least prime divisor, $T_2(n)$ the greatest non-trivial divisor, $T_3(n)$ the greatest prime divisor, and $T_4(n)$ the greatest prime less than or equal to $n$. As these constructions are based on unwinding an arithmetic term expressing the factorial function $n!$ via Wilson's theorem (writing the relation $a = b!$ as a positive exponential single-fold Diophantine representation) they lead to extremely complex and computationally intensive formulas.

    \item In \cref{sectiontwoquadratic} we define the function $\chi(n)$ meaning the biggest natural number $s$ such that $s^2 \mid n$. We show that $\chi(n)$ equals the number of remainder classes $a \in \mathbb Z_n$ such that $a^2 \equiv 0 \mod n$. We also define the function $\omega(n)$ meaning the number of prime numbers dividing $n$. We recall that $2^{\omega(n) + 1}$ equals the number of remainder classes $a \in \mathbb Z_n$ such that $a^2 \equiv 1 \mod n$. This fact is also proved and used by the authors in Prunescu and Shunia \cite{prunescushunia2024primes}.
    \item In \cref{sectioncorrespondingfunctions} we construct Diophantine equations whose number of solutions in appropriated cubes equal $\chi(n)$, respectively $2^{\omega(n) + 1}$. Using the hypercube method, and in the second case the term for $\nu_2(n)$ given in \cref{sectionpreliminaries}, we construct arithmetic terms representing the functions $\chi(n)$ and $\omega(n)$. The term representing $\omega(n)$ has also been derived previously in \cite{prunescushunia2024primes}.
    \item A term $t(m,n)$ computing $\left \lfloor \sqrt[m]{n} \right \rfloor$ has been constructed in Prunescu and Sauras-Altuzarra \cite{prunescu2024numbertheoreticfunctions}. In \cref{sectionmthrootofn} we use an arithmetic term expressing the general exponentiation $x^y$ in order to construct a somewhat simpler term for $\left \lfloor \sqrt[m]{n} \right \rfloor$.
    \item In \cref{sectionfactoringterm} we combine the arithmetic terms $\gcd(m,n)$ and $n!$ given in \cref{sectionpreliminaries} with the terms $\chi(n)$ and $\omega(n)$ given in \cref{sectioncorrespondingfunctions} and with the term $\left \lfloor \sqrt[m]{n} \right \rfloor$ given in \cref{sectionmthrootofn} in order to construct an arithmetic term $T(n)$ with the following properties: (1) for all $n \in \mathbb N$, $T(n) \mid n$ and (2) if $n$ is composite, then $1 < T(n) < n$. This construction represents a more efficient alternative to the factorial-unwinding approaches of \cref{sectiondirectapplications}, avoiding repeated factorial constructions while maintaining the fixed-length, closed-form property. An alternative term $U(n)$ is also constructed using the same ideas and the explicit use of the truncated difference $x \dot{-} y$.
    \item \cref{sectionmoredetails} is an Appendix containing more details about the intricate arithmetic terms constructed in the sections \cref{sectioncorrespondingfunctions} and \cref{sectionmthrootofn}.
    \item \cref{sectionsourcecode} is an Appendix containing SageMath source code for verifying our ultimate arithmetic terms $T(n)$ and $U(n)$.
\end{enumerate}

\section{Preliminaries}\label{sectionpreliminaries}
We denote by $\mathbb{N}$ the set of natural numbers, including $0$.

{\bf Kalmar functions} are functions $f : \mathbb{N}^k \to \mathbb{N}$ that can be computed deterministically in iterated exponential time.

Mazzanti proved in \cite{mazzanti2002plainbases} that every Kalmar function can be expressed as an arithmetic term (See also Marchenkov \cite{marchenkov2007superposition} and Prunescu et al. \cite{prunescu2025minimalsubstitutionbasiskalmar}). Mazzanti's approach, known as the \textbf{hypercube method}, provides a complete characterization of Kalmar functions within the framework of arithmetic terms.

The function $\nu_2(n)$ returns the \textbf{$2$-adic order} of $n$, which is highest exponent of $2$ dividing $n$. The function $\HW(n)$ returns the \textbf{Hamming weight} of $n$, which is the number of $1$s in the binary representation of $n$.

We will make use of the following number theoretic arithmetic terms, which are proved in \cite{mazzanti2002plainbases, marchenkov2007superposition, prunescushunia2024gcd, prunescushunia2024primes}:
\begin{align*}
\binom{a}{b} &= \floor{\frac{(2^a+1)^a}{2^{ab}}} \bmod 2^a , \\
\binom{a}{b} &= \floor{\frac{2^{2(a+2)((a+1)^2+b+1)}}{2^{2(a+2)^2} - 2^{2(a+2)} - 1}} \bmod 2^{2(a+2)} ,\\
\gcd(a,b) &= \left ( \floor{\frac{2^{ ab(ab + a + b)}}{(2^{a^2 b} - 1)(2^{ab^2}-1)}} \bmod 2^{ab} \right ) - 1 \\
\nu_2(n) &= \floor{\frac{\gcd(n, 2^n)^{n+1} \bmod (2^{n+1}-1)^2}{2^{n+1}-1}} , \\
\HW(n) &= \nu_2\left(\binom{2n}{n}\right) .
\end{align*}

 Remark that Mazzanti gave a slightly more complicated $\gcd$-formula in \cite{mazzanti2002plainbases}.  The term $\HW(n)$ displayed above is a consequence of Kummer's Theorem, see Matiyasevich \citep[Appendix]{matiyasevich1993hilbert}.

Matiyasevich also shows that the generalized geometric series are represented by arithmetic terms:
\begin{align}
G_r (q, t) = \sum_{j=0}^{t-1} j^r q^j .
\end{align}
The corresponding term in the variables $q$ and $t$ will be called $G_r(q, t)$ as well.

Consider $a,b \in \mathbb{N} : 0 \leq a < 2^b$. We define the function
\begin{align}
\delta(a,b) := (2^b - 1)(2^b - a + 1) = 2^{2b} - 2^b a + a - 1.
\end{align}
It is proved in Mazzanti \cite{mazzanti2002plainbases} that:
\begin{align*}
\HW(\delta(a,b)) = \begin{cases}
2b, & \text{if } a = 0, \\
b, & \text{if } a \neq 0.
\end{cases}
\end{align*}
Consider integers $t, u > 0$ and a function $f : [0, t-1]^k \rightarrow \mathbb{N}$ that satisfies the condition:
$$\forall \,\vec a \in [0, t-1]^k\,\,\,\, 0 \leq f(\vec a) < 2^u.$$
We count the points with integer coordinates in the cube $[0, t-1]^k$ by the function:
\begin{align*}
\beta(\vec{a}) = a_1 + a_2 t(\vec{n}) + \cdots + a_k t(\vec{n})^{k-1}.
\end{align*}
We define the quantity:
\begin{align*}
M (f,t,u) = \sum_{\vec{a} \in \{0, \dots, t-1\}^k} 2^{2u \beta(\vec{a})} \delta(f(\vec{a}), u).
\end{align*}
If represented in binary form, the number $M$ consists of concatenated values $\delta(f(\vec a), u)$. In particular, the number of lattice points:
$$\{\vec a \in [0, t-1]^k : f(\vec a) = 0 \}$$
is exactly:
$$\frac{HW(M(f,t,u))}{u} - t^k.$$
Now, suppose that the function $f(\vec x)$ is a so called {\bf simple} exponential polynomial, i.e. a sum of monomials:
\begin{align*}
m(\vec x) = c v_1^{x_1} \cdots v_k^{x_k} x_1^{r_1} \cdots x_k^{r_k},
\end{align*}
where $r_1, \ldots, r_k \geq 0$, $v_1, \ldots, v_k \geq 1$ are integers, and $c \in \Z$. For summation we use the identity:
\begin{align*}
\sum_{\vec{a} \in \{0, \dots, t-1\}^k}
a_1^{r_1} v_1^{a_1}
\cdots a_k^{r_k} v_k^{a_k} = G_{r_1}(v_1, t)
\cdots G_{r_k}(v_k, t)
= \prod_{i=1}^k G_{r_i}(v_i, t) .
\end{align*}
We define the arithmetic term:
\begin{align} \label{TermA}
\mathcal{A}_k(m(\vec{x}), t, u)
&= -(2^{u} - 1) \cdot c
\cdot
\prod_{i=1}^k G_{r_i}(2^{2u t^{i-1}} v_i, t)
\end{align}
which depends only on $t$ and $u$, the argument $m(\vec x)$ being just a symbol, and not a value. Also, to the free term $c_0$ of the exponential polynomial, meaning the term $m(\vec x)$ corresponding to $v_1 = \cdots = v_k = 1$ and $r_1 = \cdots = r_k = 0$, it corresponds the term:
\begin{align} \label{TermC}
\CTerm_k(c_0, t, u)
= \frac{(2^{u} - c + 1)(2^{2u t^k} - 1)}{2^{u} + 1}.
\end{align}
It follows that in the case that $f(\vec x)$ is a simple exponential polynomial, the expression $M(f,t,u)$ introduced above
becomes an arithmetic term depending on $t$ and $u$, but also in all coefficients of $f(\vec x)$:
$$M(f,t,u) = \CTerm_k(c_0, t, u) + \sum \mathcal{A}_k(m(\vec{x}), t, u). $$
This fact will be applied in the following form. Suppose that one considers some arithmetic term $t(\vec n) > 0$ and some exponential Diophantine equation:
$$f(\vec n, \vec x) = 0$$
in parameters $\vec n$, such that:
\begin{enumerate}
    \item[(i)] All the coefficients of $f$ are given arithmetic terms in $\vec n$.
    \item[(ii)] Various basis of exponentiation $v_i$ are themselves
    arithmetic terms (or even other kind of functions) in $\vec n$.
    \item[(iii)] There is some arithmetic term $u(\vec n) > 0$ such that
    $$\forall \,\vec a \in [0, t(\vec n)-1]^k, \quad 0 \leq f(\vec n, \vec a) < 2^{u(\vec n)}.$$
\end{enumerate}

Altogether, we proved the following:
\begin{theorem}\label{proof:numberofsolutions}
For all $\vec n$,  the number of lattice points:
$$\{\vec a \in [0, t(\vec n)-1]^k: f(\vec n, \vec a) = 0 \}$$
is given by the following arithmetic term in $\vec n$:
$$\frac{ HW(M(f, \vec n, t(\vec n),u(\vec n))) }{u(\vec n)} - t(\vec n)^k.$$
\end{theorem}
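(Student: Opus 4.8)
The plan is to assemble the two ingredients developed above: the Hamming-weight counting identity for $M$, and the fact that under hypotheses (i)--(iii) the quantity $M(f,\vec n,t(\vec n),u(\vec n))$ is itself an arithmetic term in $\vec n$. Granting these, the displayed expression $\HW(M)/u - t^k$ is built from $\HW$, integer division, exponentiation, and subtraction applied to arithmetic terms, hence is an arithmetic term; and its value equals the number of lattice zeros of $f$. So the work is to justify the counting identity and the closed form for $M$.

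First I would pin down the counting identity. The map $\beta(\vec a) = a_1 + a_2 t + \cdots + a_k t^{k-1}$ is precisely the base-$t$ digit encoding, so it restricts to a bijection from $[0,t-1]^k$ onto $\{0,1,\dots,t^k-1\}$. Since $0 \le \delta(a,u) = 2^{2u} - 2^u a + a - 1 < 2^{2u}$ for every admissible $a$, each summand $\delta(f(\vec a),u)$ fits inside a window of $2u$ bits, and multiplying by $2^{2u\beta(\vec a)}$ places it in the bit-window $[\,2u\beta(\vec a),\,2u\beta(\vec a)+2u\,)$. Because distinct $\vec a$ give distinct $\beta(\vec a)$, these windows do not overlap, so no carries occur between blocks and $\HW$ is additive: $\HW(M) = \sum_{\vec a} \HW(\delta(f(\vec a),u))$. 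Invoking the Hamming-weight property of $\delta$ recalled above (namely $2u$ when $f(\vec a)=0$ and $u$ otherwise), and writing $Z$ for the number of zeros among the $t^k$ points, I get $\HW(M) = 2uZ + u(t^k - Z) = u(Z + t^k)$, whence $\HW(M)/u - t^k = Z$.

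Next I would exhibit $M$ as an arithmetic term. Rewriting $\delta(f(\vec a),u) = (2^{2u}-1) - (2^u-1)f(\vec a)$ and substituting into the definition of $M$, the sum splits into a constant contribution $\sum_{\vec a} 2^{2u\beta(\vec a)}(2^{2u}-1)$ and a linear-in-$f$ contribution $-(2^u-1)\sum_{\vec a} 2^{2u\beta(\vec a)} f(\vec a)$. Writing $f$ as its sum of simple monomials and using $2^{2u\beta(\vec a)} = \prod_{i=1}^k (2^{2ut^{i-1}})^{a_i}$, each monomial's contribution factors over the coordinates into a product of geometric series $\sum_{a_i=0}^{t-1} a_i^{r_i}(2^{2ut^{i-1}}v_i)^{a_i} = G_{r_i}(2^{2ut^{i-1}}v_i, t)$, which is exactly the term $\mathcal{A}_k$ of \eqref{TermA}; collecting the constant part together with the contribution of the free monomial produces $\mathcal{C}_k$ of \eqref{TermC}. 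Thus $M = \mathcal{C}_k(c_0,t,u) + \sum \mathcal{A}_k(m(\vec x),t,u)$. Since $G_r$, $\HW$, $\gcd$, and the basic operations are all available as arithmetic terms, and since hypotheses (i)--(iii) guarantee that $t(\vec n)$, $u(\vec n)$, the coefficients, and the bases $v_i$ are arithmetic terms in $\vec n$, the full expression is an arithmetic term in $\vec n$.

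I expect the delicate point to be the no-carry block decomposition: one must verify both that $\delta(f(\vec a),u)$ never reaches $2^{2u}$ (so a block cannot spill into the next window) and that $\beta$ is genuinely injective on the cube (so the windows are pairwise disjoint). Both hinge on the explicit bound $0 \le f(\vec a) < 2^u$ assumed in hypothesis (iii) together with the base-$t$ structure of $\beta$; once these are in hand, additivity of $\HW$ across blocks---and with it the entire counting identity---follows immediately.
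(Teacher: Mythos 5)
Your proposal is correct and follows essentially the same route as the paper: the paper's argument (developed in the preliminaries immediately before the theorem) is exactly the concatenation-of-$\delta$-blocks reading of $M$ in binary, the additivity of $\HW$ across the disjoint $2u$-bit windows indexed by the base-$t$ encoding $\beta$, Mazzanti's dichotomy $\HW(\delta(a,u)) \in \{2u, u\}$, and the splitting $\delta(f(\vec a),u) = (2^{2u}-1) - (2^u-1)f(\vec a)$ followed by factoring each monomial's sum into products of geometric series to obtain $\mathcal{A}_k$ and $\mathcal{C}_k$. You have in fact supplied the carry-free block bookkeeping and the derivation of $\mathcal{C}_k$ in more detail than the paper, which merely asserts these steps.
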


 Arithmetic terms expressing the factorial function and the binomial coefficients go back to Julia Robinson \cite{robinson}. Such a term was constructed and used by Prunescu and Sauras-Altuzarra in \cite{prunescu2024factorial}. Here we may use the version from Prunescu and Shunia \cite{prunescushunia2024primes}:
\begin{align*}
n! &= \floor{ 8^{n^3} \Big/ \binom{8^{n^2}}{n} } .
\end{align*}
where we recall that $\binom{a}{b}$ is itself representable by an arithmetic term.

\section{Direct applications of the hypercube method}\label{sectiondirectapplications}

In this section we demonstrate how direct applications of the hypercube method presented in \cref{sectionpreliminaries} yield closed-forms $T_1(n)$, $T_2(n)$, $T_3(n)$, and $T_4(n)$ that compute various prime divisors and related quantities for composite integers $n$.

We start with a Lemma which was extensively proved by the authors in \cite{prunescushunia2024primes}. For the convenience of the reader, we will sketch the main ideas of the proof below.

\begin{lemma}\label{lemmafactorialsinglefold}
    There exists $k \in \mathbb N$ and a positive exponential polynomial $P(n, \vec x, a)$, simple in $\vec x \in \mathbb N^k$, such that for all $n, a \in \mathbb N$, one has:
    $$a = n! \longleftrightarrow \exists\,\vec x\,\,\,\, P(n, \vec x, a) =0. $$
    Moreover, this representation is single-fold, in the sense that if $a = n!$, then the tuple $\vec x \in \mathbb N^k$ that satisfies this exponential Diophantine equation is unique. Also, the exponential polynomial $P(n, \vec x, a)$ is a sum of squares.
\end{lemma}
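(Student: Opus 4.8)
The plan is to unwind the closed form $n! = \floor{8^{n^3} / \binom{8^{n^2}}{n}}$ recalled in \cref{sectionpreliminaries} into a finite system of exponential equalities, introducing one fresh existential variable for each floor, each $\bmod$, and each strict inequality, and then to take the sum of the squares of these equalities as $P$. Since only finitely many auxiliary variables arise, the constant $k$ exists immediately; the real content is that every auxiliary variable will be \emph{uniquely} determined by $n$ and $a$, which is precisely what yields single-foldedness.

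Concretely, I would first set $N = 8^{n^2}$ and expand the inner binomial via the digit-extraction identity $\binom{N}{n} = \floor{(2^N+1)^N / 2^{Nn}} \bmod 2^N$ from \cref{sectionpreliminaries}, valid because every $\binom{N}{j}$ with $0 \le j \le N$ is smaller than $2^N$, so that $(2^N+1)^N = \sum_{j=0}^N \binom{N}{j} 2^{Nj}$ exhibits $\binom{N}{n}$ as the $n$-th base-$2^N$ digit. Unwinding the floor and the $\bmod$ yields two division-with-remainder relations
\begin{align*}
(2^N+1)^N &= Q \cdot 2^{Nn} + R_1, & 0 \le R_1 < 2^{Nn}, \\
Q &= S \cdot 2^N + B, & 0 \le B < 2^N,
\end{align*}
in which $Q$, $R_1$, $S$, and the binomial value $B = \binom{N}{n}$ are each forced. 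Next I would unwind the outer floor $a = \floor{8^{n^3}/B}$ as $8^{n^3} = a \cdot B + R_2$ with $0 \le R_2 < B$, which is legitimate since $n \le N$ forces $B \ge 1$; this pins $a$ to $n!$ exactly when all the relations hold.

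To turn each strict inequality $x < y$ into an equation without destroying uniqueness, I would write $x + 1 + s = y$ with a fresh slack $s \ge 0$; because $s = y - x - 1$ is determined, single-foldedness is preserved. Collecting all the resulting equalities $E_i = 0$ and setting $P(n, \vec x, a) = \sum_i E_i^2$ produces a nonnegative exponential polynomial whose vanishing is equivalent to the conjunction of all the relations, hence to $a = n!$. Because the variables $Q, R_1, S, B, R_2$ and the slacks enter only polynomially while the exponentials $2^N$, $2^{Nn}$, $(2^N+1)^N$, $8^{n^3}$ are coefficients and bases depending on $n$ alone, $P$ contains no variable in an exponent, so expanding the squares keeps it a sum of admissible monomials, i.e.\ simple in $\vec x$.

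The main obstacle I expect is careful bookkeeping rather than a conceptual difficulty. One must verify the size bounds that make each floor and $\bmod$ identity correct (in particular $\binom{N}{j} < 2^N$ and $1 \le B$), and then argue that, given $n$ and $a = n!$, every auxiliary variable — quotients, remainders, and slacks — is forced to a single value, so that the witness tuple $\vec x$ is genuinely unique. Confirming that the squared-and-summed expression still qualifies as a \emph{simple} exponential polynomial in $\vec x$, with every cross term of the admissible monomial shape, is the final routine verification.
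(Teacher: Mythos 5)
Your overall strategy --- unwinding $n! = \floor{8^{n^3}/\binom{8^{n^2}}{n}}$ into division-with-remainder relations, turning strict inequalities into equations via slack variables, and summing squares --- is the same as the paper's, and your verification of the equivalence, the uniqueness of the witnesses, and the sum-of-squares property is sound. The genuine gap is in how you treat the parameter $n$, and that is precisely where the paper's proof does its real work. You keep $N=8^{n^2}$, $2^{Nn}$, $(2^N+1)^N$ and $8^{n^3}$ as ``coefficients depending on $n$ alone,'' so that no variable occurs in any exponent. That satisfies the clause ``simple in $\vec x$'' only under the weakest literal reading. But the lemma exists to be invoked, in the proofs of Theorem \ref{theoremsmallestprimedivisor} and Theorem \ref{theorembiggestprimedivisor}, as $P(a+b+1,\vec x, g)$, where $a$, $b$, $g$ are counting variables of an outer Diophantine equation, and the resulting polynomial must be simple in $(a,b,\dots,g,\vec x)$ \emph{jointly} for the hypercube method to count its solutions. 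After the substitution $n \mapsto a+b+1$, your coefficients become towers such as $\bigl(2^{8^{(a+b+1)^2}}+1\bigr)^{8^{(a+b+1)^2}}$, with variables buried two and three exponentiations deep and with a variable base; no monomial of the admissible form $c\, v_1^{x_1}\cdots v_k^{x_k}\, x_1^{r_1}\cdots x_k^{r_k}$ can represent this, so the construction collapses exactly where the lemma is needed.

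The paper's proof is engineered to avoid this. First, it introduces variables $x_1, x_2$ with defining equations $(x_1-n^2)^2=0$ and $(x_2-nx_1)^2=0$, so that $8^{n^2}$ and $8^{n^3}$ appear as $2^{3x_1}$ and $2^{3x_2}$: constant base, exponent linear in a variable, hence simple even when $n$ is later replaced by a sum of variables; $n$ itself occurs only polynomially throughout. Second --- and this is why the choice of binomial formula matters --- the paper expands $\binom{2^{3x_1}}{n}$ using the second formula of \cref{sectionpreliminaries}, all of whose exponentials have base $2$, so the relation $x_3 = \binom{2^{3x_1}}{n}$ can be unwound into simple equations by introducing further variables (``from $x_6$ upwards''). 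Your digit-extraction formula instead produces $(2^A+1)^A$ with $A=2^{3x_1}$: once $A$ involves variables, this is a variable-base, variable-exponent power, which is exactly what simplicity forbids and which cannot be flattened by bookkeeping alone --- it would require the general exponentiation representation only developed in \cref{sectionmthrootofn}. So the obstacle you dismissed as ``careful bookkeeping'' is the actual conceptual content of the lemma: the representation must keep $n$ out of all exponents so that it survives substitution of $n$ by a variable expression.
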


\begin{proof}
    We start with the fact that
    $$n! = \floor{ 8^{n^3} \Big/ \binom{8^{n^2}}{n} } .$$
    We apply the definition for division with remainder and we find out that:
    $$a = n! \longleftrightarrow \exists\,x_1, x_2, x_3, x_4, x_5
    $$ $$ (x_1 - n^2)^2 + (x_2 - nx_1)^2 + \left(x_3 - \binom{2^{3x_1}}{n} \right)^2 +
    (2^{3x_2} - ax_3 -x_4)^2 + (x_3 - x_4 - x_5 - 1)^2 = 0.$$
    Indeed, $x_4 < x_3$ is the remainder of the division of $2^{3x_2}$ by the number $x_3$, and $a$ will be the quotient of this division with remainder. Now, by using the formula:
    $$\binom{a}{b} = \floor{\frac{2^{2(a+2)((a+1)^2+b+1)}}{2^{2(a+2)^2} - 2^{2(a+2)} - 1}} \bmod 2^{2(a+2)}, $$
    a similar positive single-fold exponential Diophantine representation is written for the relation:
    $$x_3 = \binom{2^{3x_1}}{n}.$$
    This representation is basically the conjunction of two divisions with remainder, one for the quotient and the other one for the remainder represented by the $\bmod$ operation. It will be also a sum of squares, and the variables can be chosen from $x_6$ upwards. If we replace the squared binomial defining $x_3$ with this sum of squares, we obtain the representation we were looking for.
\end{proof}

In the following lines, we show that there exists an arithmetic term $T_1(n)$ which computes the smallest prime divisor of $n$.

\begin{lemma}\label{lemmasmallestprimedivisor}
     If $n \in \mathbb N$ is a composite number, let $k$ be the number of tuples $(a, b, c, d, e, f) \in \mathbb N^6$ satisfying the following system of Diophantine equations:
    \begin{eqnarray*}
     (a+b+2) c &=& n \\
     d \cdot (a+b+1)! + 1 &=& en \\
     d + f &=& n
    \end{eqnarray*}
        Then $D = k+1$ is the smallest nontrivial divisor of $n$.

        If $n \in \mathbb N$ is a prime number, then the system has $n-1$ solutions.
\end{lemma}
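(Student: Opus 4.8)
The plan is to reparametrize the three equations so that the combinatorics of the count becomes transparent, and then to reduce the whole problem to a single coprimality criterion governed by the least prime factor of $n$. First I would set $m = a+b+2$, so that $m$ ranges over integers $\geq 2$, and record that for a fixed value of $m$ the number of pairs $(a,b) \in \mathbb{N}^2$ with $a+b+2 = m$ is exactly $m-1$. The first equation $(a+b+2)c = n$ then reads $mc = n$; since $n \geq 2$ this forces $c \geq 1$, so $m$ must be a divisor of $n$ with $2 \leq m \leq n$, and $c = n/m$ is uniquely determined. The third equation $d+f = n$ simply confines $d$ to the range $\{0,1,\dots,n\}$ and fixes $f = n-d$. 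The second equation $d\cdot(m-1)! + 1 = en$ says precisely that $d\cdot(m-1)! \equiv -1 \pmod{n}$, with $e = (d(m-1)!+1)/n$ then determined. Thus a solution tuple is completely specified by a choice of admissible divisor $m$, a choice of $(a,b)$ realizing it, and a choice of in-range $d$ solving the congruence, and no overcounting occurs.

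The key step is to count, for each divisor $m \geq 2$ of $n$, the number of $d \in \{0,\dots,n\}$ with $d\cdot(m-1)! \equiv -1 \pmod{n}$. Writing $g = \gcd((m-1)!, n)$, the linear congruence is solvable if and only if $g \mid 1$, i.e. $g = 1$, in which case the solution is unique modulo $n$. I would then read off the coprimality condition arithmetically: since $(m-1)! = 1\cdot 2\cdots(m-1)$, we have $\gcd((m-1)!, n) = 1$ exactly when none of $2,\dots,m-1$ shares a prime with $n$, equivalently when $m-1$ lies strictly below the least prime factor $p$ of $n$, i.e. $m \leq p$. (This is where Wilson's theorem enters in spirit, and why the construction is factorial-based.) When $g = 1$ the unique residue cannot be $0$, since $d\cdot(m-1)! \equiv -1 \not\equiv 0$, and $d = n$ is likewise excluded; hence there is exactly one admissible $d$, lying in $\{1,\dots,n-1\}$.

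It remains to determine which divisors $m$ of $n$ satisfy both constraints, $2 \leq m \leq p$. The crux is that the only such divisor is $p$ itself: any divisor $m \geq 2$ has a prime factor $q \mid n$, whence $q \geq p$ and $m \geq q \geq p$, so $m \leq p$ forces $m = p$. Consequently only $m = p$ contributes, supplying its $m-1 = p-1$ realizing pairs $(a,b)$, each paired with the single admissible $d$; this gives $k = p-1$, so $D = k+1 = p$ is the least prime divisor, which for composite $n$ is the smallest nontrivial divisor. For prime $n$ the same bookkeeping applies with $p = n$: the only admissible divisor is $m = n$ (here $(n-1)! \equiv -1 \pmod{n}$ by Wilson, so the congruence is solvable), contributing $n-1$ pairs and hence $n-1$ solutions.

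The step I expect to be the main obstacle is the clean separation of the two independent constraints on $m$ — divisibility ($m \mid n$, forcing $m \geq p$) versus coprimality of $(m-1)!$ with $n$ (forcing $m \leq p$) — together with the verification that their intersection is the single value $m = p$. Everything else is the bookkeeping of how many triples $(a,b,d)$ realize that divisor, where the mildly delicate point is that the count comes out as $p-1$ rather than $p$; this off-by-one, which is exactly the source of the ``$+1$'' in $D = k+1$, traces back to writing $a+b = m-2$.
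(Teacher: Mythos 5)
Your proof is correct and follows essentially the same route as the paper's: the second equation forces $\gcd((a+b+1)!,\,n)=1$, which combined with $(a+b+2)\mid n$ pins $a+b+2$ down to the least prime factor $p$; the congruence then determines $d$ (hence $e$ and $f$) uniquely, and the $p-1$ pairs $(a,b)$ with $a+b+2=p$ give the count $k=p-1$. Your reorganization as a sum over divisors $m$, and your explicit exclusion of the boundary cases $d\in\{0,n\}$, are slightly more careful in presentation than the paper's two-claim structure, but the underlying ideas are identical.
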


\begin{proof}

 Consider a tuple $(a,b,c,d,e,f) \in \mathbb N^6$, and define $D:= a+b+2$. We will prove the following claims:

    {\it Claim 1: $D$ is the smallest non-trivial divisor of $n$. In particular, the value of $a+b$ is the same for all solutions of the system.}

    {\it Claim 2: $D = k+1$. }

    To prove {\it Claim 1}, we see that $D \geq 2$ and $D \mid n$, so $D \neq 1$ is indeed a divisor of $n$. By the second equation, the numbers $(D-1)!$ and $n$ are relatively prime, so there does not exist any smaller divisor different of $1$.

    To prove {\it Claim 2}, we observe that the values $D$, $c$, $d$, $e$, $f$ are uniquely determined by the composite number $n$. Indeed, the smallest nontrivial divisor $D$ is uniquely determined by $n$, and so is also $c := \floor{n/D}$. The number $d \leq n$ has the property that its class $d \bmod n$ multiplied with the class $(D-1)! \bmod n$ equals the class of $(n-1) \bmod n$. Such a unit $d \in \mathbb Z_n$ is uniquely determined by $n$. The numbers $e$ and $f$ are uniquely determined by $d$. It follows that, given $n$ composite, in the tuple $(a, b, c, d, e, f)$, only the natural numbers $a$ and $b$ can vary. The pair $(a,b)$ can take the following values:
    $$(D-2, 0); \,\,(D-3, 1); \,\, \dots; (0, D-2).$$
    As there are exactly $(D-2) + 1 = D-1$ such pairs, it follows that the number of tuples $(a, b, c, d, e, f) \in \mathbb N^6$ that satisfy the system, is $k = D-1$. Hence $D=k+1$.

    We also observe that if $n$ is a prime number, then $D = n$ is the only possible divisor. Then $(n-1)!+1$ is a multiple of $n$ by Wilson's Theorem, and the system is solvable. The number of solutions will be in this case $n-1$.
\end{proof}

\begin{theorem}\label{theoremsmallestprimedivisor}
    There is an arithmetic term $T_1$ such that for every natural number $n \geq 2$, the value $T_1(n)$ is equal with the smallest prime dividing $n$.
\end{theorem}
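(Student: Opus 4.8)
The plan is to realize the counting quantity $k$ of \cref{lemmasmallestprimedivisor} as an arithmetic term via the hypercube method (\cref{proof:numberofsolutions}), and then to set $T_1(n) = k(n) + 1$. The first step is to merge the three defining relations into a single equation. Writing each relation as a difference and squaring, the system $(a+b+2)c = n$, $d\,(a+b+1)! + 1 = en$, $d+f = n$ is equivalent to the vanishing of
\begin{align*}
\bigl((a+b+2)c - n\bigr)^2 + \bigl(d\,(a+b+1)! + 1 - en\bigr)^2 + (d+f-n)^2 = 0 .
\end{align*}
The only non-polynomial obstruction is the factorial $(a+b+1)!$, which is not a simple exponential polynomial in the counting variables.

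To remove it, I would introduce a fresh variable $g$ standing for $(a+b+1)!$, replace the middle summand by $(dg + 1 - en)^2$, and append the single-fold representation $P(a+b+1, \vec x, g) = 0$ furnished by \cref{lemmafactorialsinglefold}. Since $P$ is itself a sum of squares that is simple in $\vec x$, the resulting equation $E(n, a, b, c, d, e, f, g, \vec x) = 0$ is again a sum of squares, hence a positive exponential polynomial, simple in the full tuple of unknowns. The decisive point is that \cref{lemmafactorialsinglefold} is \emph{single-fold}: for each admissible sextuple $(a,b,c,d,e,f)$ the value $g = (a+b+1)!$ is forced, and then $\vec x$ is uniquely determined. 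Thus solutions of $E = 0$ are in bijection with the sextuples counted in \cref{lemmasmallestprimedivisor}, and the unwinding introduces no spurious multiplicities.

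Next I would exhibit a box side and a height bound as arithmetic terms in $n$. Every genuine solution satisfies $a, b, c, d, f \leq n$, while $dg+1 = en$ with $g = (a+b+1)! \leq (n-1)!$ forces $e \leq n!$, and the auxiliary variables $\vec x$ arising from the factorial and binomial unwindings are dominated by fixed towers such as $8^{n^3}$ in $n$. Choosing $t(n)$ to exceed all of these (an arithmetic term, using the term for $n!$ recalled in \cref{sectionpreliminaries}) confines every solution to $[0, t(n)-1]^{K}$, where $K$ is the number of unknowns in $E$. A companion arithmetic term $u(n)$ with $0 \leq E < 2^{u(n)}$ on the box follows by bounding each squared summand crudely by a power of $t(n)$. \cref{proof:numberofsolutions} then yields the arithmetic term
\begin{align*}
k(n) = \frac{\HW\!\bigl(M(E, n, t(n), u(n))\bigr)}{u(n)} - t(n)^{K} .
\end{align*}

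Finally I would define $T_1(n) = k(n) + 1$ and verify the two cases. If $n$ is composite, \cref{lemmasmallestprimedivisor} gives $k(n) = D - 1$, where $D$ is the smallest nontrivial divisor of $n$ and is therefore prime (any proper factorization of $D$ would exhibit a still smaller nontrivial divisor); hence $T_1(n) = D$. If $n$ is prime, the system has $n-1$ solutions, so $T_1(n) = n$, which is indeed the smallest prime dividing $n$. In both cases $T_1(n)$ equals the least prime divisor, and being a finite composition of the arithmetic terms above, $T_1$ is an arithmetic term. The step I expect to be most delicate is the bookkeeping of the bounds, ensuring that $t(n)$ and $u(n)$ are genuine arithmetic terms while simultaneously guaranteeing that no solution escapes the box; the single-fold property of the factorial representation is the conceptual crux that transfers the count cleanly from \cref{lemmasmallestprimedivisor} to the combined equation.
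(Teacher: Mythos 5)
Your proposal is correct and follows essentially the same route as the paper: combine the system of \cref{lemmasmallestprimedivisor} into a single sum of squares, introduce a fresh variable $g$ for $(a+b+1)!$, substitute the squared binomial by the single-fold sum-of-squares representation $P(a+b+1,\vec x, g)$ from \cref{lemmafactorialsinglefold}, count solutions via the hypercube method, and set $T_1(n)$ equal to the count plus one. The only difference is that you spell out the box and height bounds $t(n)$, $u(n)$ explicitly (the paper leaves this implicit in its appeal to \cref{proof:numberofsolutions}), which is a welcome addition rather than a deviation.
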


\begin{proof}
    As the smallest proper divisor of $n$ is always the smallest prime number dividing $n$, we have to encode only this information. The system of equations from Lemma \ref{lemmasmallestprimedivisor} is equivalent with the equation:
    $$((a+b+2) c - n)^2 + (d \cdot (a+b+1)! + 1 - en)^2 + (d + f - n)^2 = 0.$$
    We introduce a new variable $g$ for $(a+b+1)!$ and the corresponding monomial:
    $$((a+b+2) c - n)^2 + (g - (a+b+1)!)^2 + (dg  - en + 1)^2 + (d + f - n)^2 = 0.$$
    Using Lemma \ref{lemmafactorialsinglefold}, we know that $g = (a+b+1)!$ if and only if there exists a tuple $\vec x \in \mathbb N^k$ such that $P(a+b+1, \vec x, g) = 0 $. As this exponential polynomial is a sum of squares, we can substitute the squared binomial $(g - (a+b+1)!)^2$ with $P(a + b + 1, \vec x, g)$. So we obtain the exponential  polynomial:
    $$Q(n, a, b, c, d, e, f, g, \vec x) :=$$
    $$= ((a+b+2) c - n)^2 + P(a + b + 1, \vec x, g) + (dg  - en + 1)^2 + (d + f - n)^2.$$
    The exponential polynomial $Q$ is a positive function on $\mathbb N^{k+8}$ being a sum of squares. It is simple in $(a, b, c, d, e, f, g, \vec x)$. Also, for all composite numbers $n$ the number of $(k + 6)$-tuples which satisfy the equation $Q = 0$ is equal with $D-1$, where $D$ is the smallest nontrivial divisor of $n$; respectively with $n-1$ when $n$ is prime. By the hypercube method there is an arithmetic term $\theta(n)$ counting the solutions of the equation $Q=0$. We take $T_1(n):= \theta(n) + 1$.
\end{proof}

\begin{corollary}
    There is an arithmetic term $T_2$ such that for every composite number $n$, the value $T_2(n)$ is equal with the greatest nontrivial divisor of $n$, and for every prime number $p$, $T_2(p) = 1$.
\end{corollary}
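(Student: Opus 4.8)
The plan is to reduce this corollary directly to \cref{theoremsmallestprimedivisor}, exploiting the elementary duality between small and large divisors. The key observation is that for any composite $n$, the greatest nontrivial divisor is exactly $n$ divided by its \emph{smallest} nontrivial divisor. This follows from the order-reversing involution $d \mapsto n/d$ on the set of divisors of $n$: pairing each divisor with its cofactor sends the smallest element of $\{d : 1 < d \mid n\}$ to the largest element of the same set. Since the smallest nontrivial divisor of a composite number is always its smallest prime divisor, and \cref{theoremsmallestprimedivisor} already furnishes an arithmetic term $T_1(n)$ computing that quantity, the natural candidate is simply
$$T_2(n) := n / T_1(n),$$
where $/$ denotes integer division, which is one of the permitted operations in the language $L$. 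Thus $T_2$ is an arithmetic term by construction.

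The verification then splits into the two cases of the statement. First I would treat composite $n$: here $T_1(n) = p$, the smallest prime divisor, and since $p \mid n$ the integer division is exact, giving $T_2(n) = n/p$, which by the duality above is precisely the greatest nontrivial divisor. One should note for completeness that $n/p$ is genuinely nontrivial, i.e. $1 < n/p < n$: the lower bound holds because $p \le \sqrt{n}$ forces $n/p \ge \sqrt{n} \ge 2$ for composite $n \ge 4$, and the upper bound holds because $p > 1$. Second, for a prime $p$ we have $T_1(p) = p$ (the smallest prime dividing $p$ is $p$ itself), so $T_2(p) = p/p = 1$, matching the required boundary behavior.

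There is essentially no substantive obstacle in this argument; the only point demanding a moment's care is confirming that the division $n/T_1(n)$ is \emph{exact} rather than truncating, which is immediate because $T_1(n)$ is by definition a divisor of $n$ in every case. Consequently the integer-division operator returns the true quotient, and no correction term is needed. I would therefore present the corollary as a short deduction: define $T_2(n) := n/T_1(n)$, invoke \cref{theoremsmallestprimedivisor}, and dispatch the composite and prime cases by the divisor-pairing remark above.
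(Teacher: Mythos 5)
Your proposal is correct and matches the paper's own proof essentially verbatim: the paper likewise defines $T_2(n) = \floor{n/T_1(n)}$ using the term $T_1$ from \cref{theoremsmallestprimedivisor}, relying on the same duality $d \mapsto n/d$ between the smallest and greatest nontrivial divisors. Your additional verification of the composite and prime cases is sound and simply makes explicit what the paper leaves implicit.
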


\begin{proof}
    Let $t_1$ be the term constructed in Theorem \ref{theoremsmallestprimedivisor}. We define the arithmetic term $T_2$ by the formula:
    \begin{align*}
        T_2(n) = \floor{\frac{n}{T_1(n)}} .
    \end{align*}
\end{proof}

Further we show that there exists an arithmetic term $T_3(n)$ which, for every natural number $n$, computes the greatest prime divisor of $n$.

\begin{lemma}\label{lemmabiggestprimedivisor}
     If $n \in \mathbb N$ is a natural number, $n \geq 2$, let $k$ be the number of tuples $(a, b, c, d, e) \in \mathbb N^5$ satisfying the following system of Diophantine equations:
    \begin{eqnarray*}
    (a+b+2)c &=& n \\
    (a+b+1)! + 1 &=& d (a+b+2) \\
    ((a+b+2)!)^n &=& n e
    \end{eqnarray*}
        Then $D = k+1$ is the greatest prime divisor of $n$.

        If $n \in \mathbb N$ is a prime number, then the system has $n-1$ solutions.
\end{lemma}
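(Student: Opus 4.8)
The plan is to follow the same two-claim structure as in the proof of Lemma \ref{lemmasmallestprimedivisor}. Setting $D := a+b+2 \geq 2$, I would first read off from the three equations what constraints they impose on $D$. The first equation $(a+b+2)c = n$ immediately gives $D \mid n$ with $D \geq 2$. The second equation $(a+b+1)! + 1 = d(a+b+2)$ rewrites as $(D-1)! \equiv -1 \pmod{D}$; by Wilson's theorem together with its converse, this congruence holds for $D \geq 2$ precisely when $D$ is prime. Hence $D$ is a prime divisor of $n$, and in particular $D \leq P$, where $P$ denotes the greatest prime divisor of $n$.

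The crux is the third equation $((a+b+2)!)^n = ne$, which asserts $n \mid (D!)^n$, and I would use it to force $D = P$. This is a $p$-adic valuation argument. If some prime $q > D$ divided $n$, then since $q$ does not appear among the factors $1, 2, \dots, D$ we would have $q \nmid D!$, hence $q \nmid (D!)^n$; but $q \mid n \mid (D!)^n$, a contradiction. Therefore every prime divisor of $n$ is at most $D$, giving $P \leq D$, and combined with $D \leq P$ this yields $D = P$. For the reverse consistency, when $D = P$ every prime power $p^{e}$ exactly dividing $n$ has $p \leq D$, so $p$ occurs in $D!$ and thus with multiplicity at least $n > e$ in $(D!)^n$; hence $n \mid (D!)^n$ and a solution genuinely exists. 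I expect this divisibility step to be the main obstacle, since it is the one ingredient that has no analogue in Lemma \ref{lemmasmallestprimedivisor}, where coprimality with $(D-1)!$ played the corresponding role.

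With Claim~1 ($D = P$) in place, the counting argument for Claim~2 ($D = k+1$) is routine. Since $D = P$ is determined by $n$ alone, so are $c = n/D$, the integer $d = ((D-1)!+1)/D$ (well-defined because $D$ is prime, by Wilson), and $e = (D!)^n/n$. Thus in any solution only $a$ and $b$ may vary, subject to $a + b = D - 2$; the admissible pairs are $(D-2,0),\, (D-3,1),\, \dots,\, (0, D-2)$, exactly $D-1$ of them. Hence $k = D - 1$ and $D = k+1$.

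Finally, for the prime case I would observe that if $n$ is prime, its only divisor $\geq 2$ is $n$ itself, so the first equation forces $D = n$. Wilson's theorem makes the second equation solvable with $d = ((n-1)!+1)/n$, and the third is solvable with $e = (n!)^n/n$ because $n \mid n!$. Again only the pairs $(a,b)$ with $a+b = n-2$ vary, producing $n-1$ solutions, as claimed.
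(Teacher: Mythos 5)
Your proposal is correct and follows essentially the same route as the paper's own proof: Wilson's theorem (with its converse) applied to the second equation to force $D$ prime, the first equation to get $D \mid n$, the third equation to bound all prime divisors of $n$ by $D$ via the valuation comparison $\beta \geq n > \alpha$, and then the same counting of the $D-1$ pairs $(a,b)$ with $a+b = D-2$. Your treatment is, if anything, slightly more explicit than the paper's in spelling out the contradiction for a hypothetical prime $q > D$ and in verifying that a solution genuinely exists when $D$ equals the greatest prime divisor, but the underlying argument is identical.
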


\begin{proof}

 Consider a tuple $(a,b,c,d,e) \in \mathbb N^5$, and define $D:= a+b+2$. We will prove the following claims:

    {\it Claim 1: $D$ is the greatest prime divisor of $n$. In particular, the value of $a+b$ is the same for all solutions of the system.}

    {\it Claim 2: $D = k+1$. }

    To prove {\it Claim 1}, we observe that by the second equation, $(D-1)! + 1 = dD$, so according to Wilson's Theorem, $D$ is a prime number. From the first equation, $D \mid n$. From the last equation, all prime divisors of $n$ divide $D!$, so all these prime numbers are less or equal $D$. So $D$ is the greatest prime divisor of $n$.

    To prove {\it Claim 2}, we observe that the values $D$, $c$, $d$, $e$, $f$ are uniquely determined by the composite number $n$. Indeed, the greatest prime divisor $D$ is uniquely determined by $n$, and so is also $c := \floor{n/D}$. The number $d$ has the fixed value $\floor{[(D-1)! + 1] / D}$. Finally, $e$ fulfills $ (D!)^n = e n$. We observe that the number $e$ always exists if $D$ is the greatest prime divisor of $n$. Indeed, if $2 \leq p \leq D$ is any prime divisor of $n$, its exponent $\alpha$ in the prime number decomposition of $n$ satisfies $\alpha < n$. In the prime number decomposition of $(D!)^n$, the exponent of $p$ is $\beta \geq n > \alpha$. As this is true for every prime divisor of $n$, the number $e$ does exist.   It follows that, given $n \geq 2$ natural number, in the tuple $(a, b, c, d, e)$, only the natural numbers $a$ and $b$ can vary. The pair $(a,b)$ can take the following values:
    $$(D-2, 0); \,\,(D-3, 1); \,\, \dots; (0, D-2).$$
    As there are exactly $(D-2) + 1 = D-1$ such pairs, it follows that the number of tuples $(a, b, c, d, e, f) \in \mathbb N^6$ that satisfy the system, is $k = D-1$. Hence $D=k+1$.

    We also observe that if $n$ is a prime number, then $D = n$ is the only possible divisor. Then $(n-1)!+1$ is a multiple of $n$ by Wilson's Theorem, and the system is solvable. The number of solutions will be in this case $n-1$.
\end{proof}

\begin{theorem}\label{theorembiggestprimedivisor}
        There is an arithmetic term $T_3$ such that for every natural number $n \geq 2$, the value $T_3(n)$ is equal with the greatest prime dividing $n$.
\end{theorem}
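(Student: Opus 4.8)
The plan is to mirror the construction of $T_1$ in the proof of \cref{theoremsmallestprimedivisor}, now starting from the system of \cref{lemmabiggestprimedivisor}. First I would rewrite that system as a single equation by summing squares,
$$((a+b+2)c - n)^2 + ((a+b+1)! + 1 - d(a+b+2))^2 + (((a+b+2)!)^n - ne)^2 = 0,$$
so that a tuple $(a,b,c,d,e)$ solves the system exactly when this sum vanishes. By \cref{lemmabiggestprimedivisor} the number of such tuples is $D-1$, where $D=a+b+2$ is the greatest prime divisor of $n$ (and $n-1$ when $n$ is prime), so it will suffice to count the zeros and add $1$.

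Next I would eliminate the two non-polynomial ingredients, the factorial and the $n$-th power, by introducing fresh variables. Introducing $g$ together with the square $(g-(a+b+1)!)^2$ turns the second term into the polynomial $(g+1-d(a+b+2))^2$, and since $(a+b+2)! = (a+b+2)(a+b+1)! = (a+b+2)g$, it lets me rewrite the third term using $h := (a+b+2)g$ (recorded by $(h-(a+b+2)g)^2$) and $w := h^n$ (so the equation reads $(w-ne)^2$). As in the proof of \cref{theoremsmallestprimedivisor}, the relation $g=(a+b+1)!$ is replaced by the single-fold positive simple representation $P(a+b+1,\vec x, g)$ of \cref{lemmafactorialsinglefold}. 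The remaining relation $w=h^n$ must be given a single-fold positive simple exponential Diophantine representation $R(h,n,w,\vec y)$, obtained by the same unwinding technique used for the factorial, applied to an arithmetic term for the general exponentiation $x^y$ (whose existence is classical, Mazzanti \cite{mazzanti2002plainbases}), writing each integer division and each $\bmod$ as a squared difference.

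Assembling these pieces yields
\begin{align*}
Q(n,a,b,c,d,e,g,h,w,\vec x,\vec y) :={}& ((a+b+2)c - n)^2 + (g + 1 - d(a+b+2))^2 + P(a+b+1,\vec x, g) \\
&+ (h - (a+b+2)g)^2 + R(h,n,w,\vec y) + (w - ne)^2,
\end{align*}
a sum of squares, hence a positive exponential polynomial that is simple in all of its existential variables and whose zero set is in bijection with the solutions of the original system: by single-foldedness each $(a,b,c,d,e)$ extends uniquely to values of $g,h,w,\vec x,\vec y$. Exhibiting arithmetic terms $t(n)$ for a box $[0,t(n)-1]^k$ containing every solution and $u(n)$ with $0 \le Q < 2^{u(n)}$ on that box, \cref{proof:numberofsolutions} then produces an arithmetic term $\theta(n)$ counting the zeros of $Q$, and I would set $T_3(n) := \theta(n)+1$; this equals $D$ for composite $n$ and equals $n$ when $n$ is prime, as required.

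The main obstacle I expect is the representation of the $n$-th power $w=h^n$: unlike the factorial, here the exponent $n$ is a parameter while the base $h$ is built from the variables, so care is needed to keep the unwinding \emph{simple} — every exponential must end up with a constant (or parameter) base and an exponent that is linear in a single variable, with no products of variables appearing in any exponent. This is arranged exactly as in \cref{lemmafactorialsinglefold}, by introducing an auxiliary variable for each partial product so that products are pushed into polynomial squared-difference constraints rather than into exponents. The bookkeeping needed to certify $t(n)$ and $u(n)$ (the values $e$ and $w$ are as large as $(n!)^n$) is routine, but it is precisely what drives the extreme size of the resulting term.
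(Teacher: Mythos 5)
Your proposal is correct and follows essentially the same route as the paper's proof: rewrite the system of \cref{lemmabiggestprimedivisor} as a sum of squares, introduce fresh variables for $(a+b+1)!$, $(a+b+2)!$, and the $n$-th power, replace the factorial relation by the single-fold representation $P$ of \cref{lemmafactorialsinglefold} and the power relation by a single-fold simple representation of exponentiation (the paper builds this from $g^n = 2^{3n^2 g} \bmod (2^{3ng}-g)$ in \cref{sectionmthrootofn}, which is just a lighter variant of the Marchenkov/Mazzanti term you invoke), then count solutions by the hypercube method and add $1$. The only differences are variable names and your explicit remark on single-fold extension of solutions, which the paper leaves implicit.
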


\begin{proof}
    The proof is similar with the proof of Theorem \ref{theoremsmallestprimedivisor} and consists in writing down a positive exponential Diophantine polynomial which contains $n$ as a parameter and is simple in its variables, such that for every value of $n \geq 2$, this exponential polynomial has exactly $D - 1$ many solutions, where $D$ is the greatest prime dividing $n$.

    As we don't want a double occurrence of the factorial operation, we introduce new  unknowns $f$, $g$, $h$ and we write the system of equations from Lemma \ref{lemmabiggestprimedivisor} in the form:
    $$ ((a+b+2)c-n)^2 + (f - (a+b+1)!)^2 + (f+1-d(a+b+2))^2 + (g - f(a+b+2))^2 + (h-g^n)^2 + (h - ne)^2 = 0. $$
    Of course, in every solution $f = (a+b+1)!$,
    $g = (a+b+2)!$ and $h = ((a+b+2)!)^n$. Like in the proof of Theorem \ref{theoremsmallestprimedivisor}, we replace the squared binomial $(f-(a+b+1)!)^2$ with the sum of squares $P(a+b+1, \vec x, f)$. We still have to get rid of non-simple squared binomial
    $(h-g^n)^2$. In the section \ref{sectionmthrootofn} we will prove that:
    $$ g^n = 2^{3n^2g} \bmod (2^{3ng} - g) $$
    for all $n, g \geq 1$. Based on this fact, in the same section we will find a simple in $\vec y$ exponential polynomial sum of squares
    $ E(a, b, \vec y, c) $ such that:
    $$c = a^b \longleftrightarrow \exists \, \vec y \,\,\,\, E(a, b, \vec y, c) = 0.$$
    Moreover, this representation is single-fold. The sum of squares $E(g, n, \vec y, h)$ replaces the squared binomial $(h - g^n)^2$. We obtain the exponential polynomial:
    $$R(n, a, b, c, d, e, f, g, h, \vec x, \vec y):=$$
    $$= ((a+b+2)c-n)^2 + P(a+b+1, \vec x, f) + (f+1-d(a+b+2))^2 + (g - f(a+b+2))^2 + E(g, n, \vec y, h) + (h - ne)^2.$$
    The number of solutions $k$ of the equation $R=0$ are computed by a term $\rho(n)$ obtained by the hypercube method. We take $T_3(n) := \rho(n) + 1$.
\end{proof}

Recall that the OEIS sequence A007917 lists the largest prime less than or equal to $n$, see \cite{oeisA007917}.

\begin{corollary}
    There is an arithmetic term $T_4(n)$ which for all $n \geq 2$ outputs the greatest prime less than or equal to $n$.
\end{corollary}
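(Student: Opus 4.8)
The plan is to obtain $T_4$ by composing the term $T_3$ from \cref{theorembiggestprimedivisor} with the arithmetic term for the factorial recalled in \cref{sectionpreliminaries}. The guiding observation is elementary: for every $n \geq 2$, the prime divisors of $n!$ are exactly the primes $p \leq n$. Writing $n! = 1 \cdot 2 \cdots n$, each prime $p \leq n$ occurs among the factors and so divides $n!$; conversely, any prime dividing $n! = \prod_{j=1}^{n} j$ must divide some $j \leq n$ and is therefore $\leq n$. Hence the greatest prime divisor of $n!$ is precisely the greatest prime not exceeding $n$.

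Accepting this, I would define
$$T_4(n) := T_3(n!),$$
where the inner $n!$ is literally the arithmetic term $\floor{8^{n^3} / \binom{8^{n^2}}{n}}$ and $T_3$ is the term already built. Because arithmetic terms are closed under composition (substituting one term into a variable slot of another again yields an arithmetic term), $T_4$ is an arithmetic term in the single variable $n$, with no new factorial-unwinding or hypercube counting needed beyond what $T_3$ already performs.

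The verification I would then carry out is merely a check of hypotheses. For $n \geq 2$ we have $n! \geq 2$, so $T_3$ receives a legal argument; and since $T_3$ is known to return the greatest prime divisor of its input even when that input is prime (the case $n = 2$, where $2! = 2$) or a prime power, the value $T_3(n!)$ equals the greatest prime $\leq n$ in every case. I expect no genuine obstacle here: the entire construction rests on the single fact that the largest prime factor of $n!$ is the largest prime $\leq n$, and closure under composition supplies the rest. The only point meriting a line of care is confirming that $T_3$ behaves correctly on the boundary input $n! = 2$ at $n = 2$, which follows directly from its definition as a solution-count plus one.
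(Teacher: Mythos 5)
Your proposal is correct and is essentially identical to the paper's own proof, which also defines $T_4(n) := T_3(n!)$ using the arithmetic term for the factorial, relying on the fact that the greatest prime divisor of $n!$ is the greatest prime $\leq n$. The paper only adds the brief remark that one may alternatively take $T_4(n) := T_3\left(\binom{n}{\floor{n/2}}\right)$, since the binomial coefficient (which shares the same large prime divisors in the relevant range) is easier to compute than the factorial.
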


\begin{proof}
    The arithmetic term $T_4(n) := T_3( n! )$ fulfills that condition. As the binomial coefficients are easier to compute, we may also consider the term:
    $$T_4(n) := T_3 \left ( \binom{n}{\floor{n/2}}\right ).$$
\end{proof}

While the arithmetic terms $T_1(n)$, $T_2(n)$, $T_3(n)$, and $T_4(n)$ constructed in this section demonstrate that integer divisors exhibit fixed-length elementary closed-forms, the factorial-unwinding approach leads to enormous computational complexity. Each of these terms requires unwinding the factorial function multiple times through complex exponential Diophantine representations, resulting in formulas with numerous variables and exponentially large intermediate values.

Recognizing these limitations, we develop in the remainder of the paper an alternative arithmetic term $T(n)$ based on fundamentally different ideas. Rather than unwinding factorials, the construction in \cref{sectionfactoringterm} combines arithmetic terms for the quadratic residue-based invariants $\chi(n)$ (the largest square divisor) and $\omega(n)$ (the number of distinct prime divisors), developed in \cref{sectiontwoquadratic}--\cref{sectioncorrespondingfunctions}, together with a term for the $m$-th root function constructed in \cref{sectionmthrootofn}. While $T(n)$ remains computationally infeasible for large $n$ in absolute terms, it avoids the repeated factorial constructions that make the terms of this section comparatively more complex.

\section{Two quadratic equations modulo n}\label{sectiontwoquadratic}

In this section, $\mathbb Z_n = \mathbb Z / n \mathbb Z$ means the ring consisting of the remainder classes modulo $n \geq 2$. By the Chinese Remainder Theorem, if the prime-number decomposition of $n$ is $n = p_1^{k_1}\dots p_t^{k_t}$, then the following rings are isomorphic:
\begin{align*}
\Z/n\Z \cong \Z/p_1^{k_1}\Z \times \cdots \times \Z/p_t^{k_t }\Z.
\end{align*}
We consider the function $\chi : \mathbb N \setminus\{0\} \rightarrow \mathbb N$ defined as:
$$\chi(n) = \max \{ s \in \mathbb N : s^2 | n \}.$$

\begin{theorem}\label{modeqchi}
    For all $n \geq 1$,
    $$\chi(n) = |\{ a \in \mathbb Z_n : a^2 = 0 \textup{  in } \mathbb Z_n \} |$$
\end{theorem}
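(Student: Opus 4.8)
The plan is to reduce the problem to a counting question on each local factor $\mathbb{Z}/p_i^{k_i}\mathbb{Z}$ via the Chinese Remainder Theorem, and then count nilpotent-square solutions prime by prime. First I would invoke the ring isomorphism $\mathbb{Z}/n\mathbb{Z} \cong \prod_i \mathbb{Z}/p_i^{k_i}\mathbb{Z}$ recalled just above the statement. Under this isomorphism, an element $a$ corresponds to a tuple $(a_1, \dots, a_t)$, and $a^2 = 0$ in $\mathbb{Z}_n$ holds if and only if $a_i^2 = 0$ in $\mathbb{Z}/p_i^{k_i}\mathbb{Z}$ for each $i$ independently. Consequently the total count factors as a product:
\begin{align*}
|\{ a \in \mathbb{Z}_n : a^2 = 0 \}| = \prod_{i=1}^t |\{ a_i \in \mathbb{Z}/p_i^{k_i}\mathbb{Z} : a_i^2 \equiv 0 \bmod p_i^{k_i} \}|.
\end{align*}

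The heart of the argument is the local count. For a prime power $p^k$, the condition $a^2 \equiv 0 \pmod{p^k}$ means $p^k \mid a^2$, which since $a^2$ has even $p$-valuation $2\nu_p(a)$ is equivalent to $2\nu_p(a) \geq k$, i.e. $\nu_p(a) \geq \lceil k/2 \rceil$. Thus the solutions in $\{0, 1, \dots, p^k - 1\}$ are precisely the multiples of $p^{\lceil k/2 \rceil}$, and there are exactly $p^k / p^{\lceil k/2 \rceil} = p^{\lfloor k/2 \rfloor}$ of them. So the local count is $p_i^{\lfloor k_i/2 \rfloor}$.

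Multiplying across all primes gives $\prod_i p_i^{\lfloor k_i/2 \rfloor}$, and the final step is to identify this with $\chi(n)$. I would argue that $\chi(n)$, the largest $s$ with $s^2 \mid n$, is exactly $\prod_i p_i^{\lfloor k_i/2 \rfloor}$: writing $s = \prod_i p_i^{e_i}$, the condition $s^2 \mid n$ becomes $2 e_i \leq k_i$ for each $i$, maximized termwise by $e_i = \lfloor k_i/2 \rfloor$. This completes the identification and hence the equality. I would also dispatch the trivial boundary case $n = 1$ separately, where both sides equal $1$ (the empty product, and the single element $0 \in \mathbb{Z}_1$ squaring to itself).

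The only mild subtlety — the ``main obstacle'' if there is one — is being careful with the valuation inequality: the jump from $p^k \mid a^2$ to $\nu_p(a) \geq \lceil k/2 \rceil$ requires distinguishing odd from even $k$ through the ceiling, and then confirming that the complementary floor $\lfloor k/2 \rfloor$ is what controls the exponent count, with $\lceil k/2 \rceil + \lfloor k/2 \rfloor = k$. Everything else is routine bookkeeping with the Chinese Remainder Theorem and multiplicativity.
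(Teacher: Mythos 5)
Your proposal is correct and follows essentially the same route as the paper's proof: reduce via the Chinese Remainder Theorem to counting solutions of $a^2 \equiv 0$ in each $\mathbb{Z}/p^k\mathbb{Z}$, find $p^{\lfloor k/2 \rfloor}$ local solutions, and multiply. The only difference is presentational — you unify the local count with the valuation inequality $\nu_p(a) \geq \lceil k/2 \rceil$, where the paper splits into the cases $n = p^{2k}$, $n = p^{2k+1}$, and $n = p$ — and you additionally spell out the identification $\chi(n) = \prod_i p_i^{\lfloor k_i/2 \rfloor}$, which the paper asserts without detail.
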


\begin{proof}
Consider $n = p^{2k}$ where $p$ is a prime and $k \geq 1$.
Let $x \in \{0, 1, \dots, p^{2k} - 1\}$ be the representative of a solution of the equation $x^2 = 0$ in $\mathbb Z_{p^{2k}}$.
Then
\begin{align*}
   p^{2k} \mid x^2 \iff p^k \mid x .
\end{align*}
A multiple of $p^k$ which is smaller than $p^{2k}$ is of the form
\begin{align*}
s \cdot p^k < p^{2k} \iff s < p^k.
\end{align*}
So there are exactly $p^k$ many solutions, corresponding to all values $s \in \{0, \dots, p^k-1\}$.

Next, consider $n = p^{2k + 1}$, with $p$ prime and $k \geq 1$. Let $x \in \{0, 1, \dots, p^{2k + 1} - 1\}$ be the representative of a solution of the equation $x^2 = 0$ in $\mathbb Z_{p^{2k + 1}}$.
Then
\begin{align*}
    p^{2k+1} \mid x^2 \iff p^{k + 1} \mid x .
\end{align*}
A multiple of $p^{k + 1}$ which is smaller than $p^{2k + 1}$ is of the form
\begin{align*}
s \cdot p^{k + 1} < p^{2k+1} \iff s < p^k .
\end{align*}
So there are again exactly $p^k$ many solutions.

If $n = p$ is a prime, then the condition $a^2 \equiv 0 \pmod{p}$ implies $p \mid a$. Since $a$ must be in $\{0, 1, \dots, p - 1\}$, the only value that satisfies this condition is $a = 0$. Hence, there is exactly one solution.

Finally, consider the general case where:
\begin{align*}
n = p_1^{2k_1 + b_1} \cdots p_t^{2k_t + b_t} \quad \text{ with } b_i \in \{0, 1\}.
\end{align*}
By the Chinese Remainder Theorem, there exists an isomorphism of rings:
\begin{align*}
\Z/n\Z \cong \Z/(p_1^{2k_1 + b_1})\Z \times \cdots \times \Z/(p_t^{2k_t + b_t})\Z.
\end{align*}
The total number of solutions for $x^2 \equiv 0 \pmod{n}$ is the product of the numbers of solutions for each prime power component:
\begin{align*}
\prod_{i=1}^t p_i^{k_i}.
\end{align*}
This product is precisely the square root of the largest square dividing $n$:
\begin{align*}
\sqrt{p_1^{2k_1} \cdots p_t^{2k_t}} = \chi(n).
\end{align*}
\end{proof}

We also consider the function $\omega : \mathbb N \setminus \{0\} \rightarrow \mathbb N$ defined as:
$$\omega(n) = | \{ p \textup{ prime }:\,\, p \mid n \}|.$$

The following Theorem has been also proved and used by the authors in \cite{prunescushunia2024primes}. We sketch it here only to be compared with the proof above.

\begin{theorem}\label{eqmodomega}
    For all $n \geq 1$,
    $$2^{\omega(n) + 1} = |\{ a \in \mathbb Z_{4n} : a^2 = 1 \textup{  in } \mathbb Z_{4n} \} |.$$
\end{theorem}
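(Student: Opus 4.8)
The plan is to count the solutions of $a^2 = 1$ in $\mathbb{Z}_{4n}$ directly via the Chinese Remainder Theorem, reducing the global count to a product of local counts over the prime-power factors of $4n$, and then to reconcile the resulting power of $2$ with $\omega(n)$ through a short parity analysis. Writing the prime factorization of $n$ as $n = 2^a \prod_{i=1}^t p_i^{e_i}$ with the $p_i$ odd and $a \geq 0$, one has $4n = 2^{a+2} \prod_{i=1}^t p_i^{e_i}$. Since these prime-power factors are pairwise coprime, the ring isomorphism recalled at the start of the section gives
$$\abs{\{ a \in \mathbb{Z}_{4n} : a^2 = 1 \}} = N_2 \cdot \prod_{i=1}^t N_i,$$
where $N_2$ counts the solutions of $x^2 \equiv 1 \pmod{2^{a+2}}$ and each $N_i$ counts them modulo $p_i^{e_i}$.

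First I would settle the odd local factors. For an odd prime power $p^e$ the unit group is cyclic of even order, so $x^2 \equiv 1 \pmod{p^e}$ has exactly the two roots $x \equiv \pm 1$; hence $N_i = 2$ for every $i$, contributing $2^t$ to the product. The delicate factor is $N_2$, the $2$-adic count, because the number of square roots of $1$ modulo a power of $2$ is \emph{not} uniformly $2$: it is $1$ modulo $2$, it is $2$ modulo $4$, and it is $4$ modulo $2^k$ for every $k \geq 3$, the four roots being $\pm 1$ and $2^{k-1} \pm 1$. This is precisely why the statement is phrased modulo $4n$ rather than modulo $n$: the extra factor $4$ forces the exponent $a+2$ to be at least $2$, so the degenerate case of exponent $1$ never arises, and it aligns the two remaining cases with the parity of $n$.

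The main obstacle is therefore the bookkeeping in this $2$-adic case, which I would organize by splitting on whether $n$ is odd or even. If $n$ is odd then $a = 0$, the exponent is $a+2 = 2$, so $N_2 = 2$; moreover $\omega(n) = t$ since $2 \nmid n$, and the total count is $2 \cdot 2^t = 2^{t+1} = 2^{\omega(n)+1}$. If $n$ is even then $a \geq 1$, the exponent is $a+2 \geq 3$, so $N_2 = 4$; moreover $\omega(n) = t+1$ because the prime $2$ now divides $n$, and the total count is $4 \cdot 2^t = 2^{t+2} = 2^{\omega(n)+1}$. In both cases the identity holds, completing the argument. The one step that genuinely requires care is verifying the values of $N_2$, in particular that the count jumps to $4$ once the exponent is at least $3$; this follows from solving $(x-1)(x+1) \equiv 0 \pmod{2^k}$ and observing that $x$ must be odd, whence $x-1$ and $x+1$ are consecutive even numbers sharing exactly one factor of $2$, so one of them must absorb the full $2^{k-1}$, yielding the four residues $\pm 1$ and $2^{k-1} \pm 1$.
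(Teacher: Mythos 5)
Your proof is correct and follows essentially the same route as the paper: decompose $\mathbb{Z}_{4n}$ via the Chinese Remainder Theorem, count two square roots of unity for each odd prime power, count two or four for the $2$-power factor depending on whether its exponent is $2$ or at least $3$, and close with the same parity case split on $n$. The only difference is that you supply the justifications for the local counts (cyclic unit groups, the $(x-1)(x+1)$ argument) that the paper's sketch merely asserts.
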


\begin{proof} If $p$ is an odd prime and $k \geq 1$, then the equation $x^2 = 1$ has in $\mathbb Z_{p^k}$ only the solutions $\{1, -1\}$. The same is true for the ring $\mathbb Z_4$. In $\mathbb Z_{2^k}$ with $k \geq 3$, there are four solutions: $\{1, 2^{k-1} - 1, 2^{k-1} + 1, 2^k -1\}$. Now suppose that $n$ is odd and has $k$ prime divisors. The number of solutions of the equation $x^2 = 1$ in $\mathbb Z_{4n}$ will be then $2 \cdot 2^k = 2^{k+1}$. If $n$ is even and has $k$ prime divisors, the number of solutions of $x^2 = 1$ in $\mathbb Z_{4k}$ is $2^2 \cdot 2^{k-1} = 2^{k+1}$.
\end{proof}

\section{The corresponding functions}\label{sectioncorrespondingfunctions}

First we show:

\begin{lemma}
$$\chi(n) = |\{ a \in \mathbb Z_n : a^2 = 0 \textup{  in } \mathbb Z_n \} | = |\{ (x,y) \in [0, n-1]^2 : x^2 = ny\} | $$
\end{lemma}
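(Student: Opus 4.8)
The first equality is exactly the content of \cref{modeqchi}, which I may invoke directly, so the entire task reduces to proving the second equality, namely that the number of residues $a \in \mathbb{Z}_n$ with $a^2 = 0$ equals the number of lattice points $(x,y) \in [0,n-1]^2$ with $x^2 = ny$. The plan is to exhibit an explicit bijection between the two finite sets rather than recompute either cardinality. Write
\[
A = \{\, a \in \{0,1,\dots,n-1\} : n \mid a^2 \,\}, \qquad B = \{\, (x,y) \in [0,n-1]^2 : x^2 = ny \,\},
\]
where I identify a class $a \in \mathbb{Z}_n$ with its canonical representative in $\{0,\dots,n-1\}$, so that $a^2 = 0$ in $\mathbb{Z}_n$ becomes the integer divisibility $n \mid a^2$.

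The map I would use is $\phi : A \to B$, $\phi(a) = (a,\, a^2/n)$, with proposed inverse $\psi : B \to A$, $\psi(x,y) = x$. First I would check that $\phi$ is well-defined: since $n \mid a^2$, the quotient $y := a^2/n$ is a nonnegative integer, and it satisfies $x^2 = ny$ by construction with $x = a$. Then I would verify that the two maps compose to the identity in both directions: $\psi(\phi(a)) = a$ is immediate, and $\phi(\psi(x,y)) = (x,\, x^2/n) = (x,y)$ holds because the equation $x^2 = ny$ forces $y = x^2/n$, so the second coordinate is uniquely recovered from the first. It remains only to confirm that $\psi$ lands in $A$, which follows because $x^2 = ny$ gives $n \mid x^2$ and $x \in \{0,\dots,n-1\}$ by the definition of $B$.

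The one genuinely load-bearing step — the part I expect to be the main obstacle, though it is mild — is verifying that $\phi(a)$ actually lands in $B$ rather than merely in $[0,n-1] \times \mathbb{N}$; concretely, that the second coordinate $y = a^2/n$ satisfies $y \leq n-1$. This is where the range restriction $a \leq n-1$ is essential: from $0 \leq a \leq n-1$ I get $a^2 \leq (n-1)^2 < n^2$, hence $y = a^2/n < n$, so indeed $y \in \{0,1,\dots,n-1\}$. With this bound in place the bijection is complete, giving $|A| = |B|$, and chaining through \cref{modeqchi} yields the stated triple equality. I would also remark that the bijection is purely set-theoretic and uses no structure of $n$ (such as its factorization), which is what makes it suitable for the subsequent passage to an arithmetic term via the hypercube method.
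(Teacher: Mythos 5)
Your proposal is correct and follows essentially the same route as the paper: the paper's proof likewise takes the canonical representative $0 \leq x < n$ of each solution of $x^2 = 0$ in $\mathbb{Z}_n$, pairs it with $y = x^2/n$, bounds $y < n^2/n = n$, and observes the converse direction, which is exactly your bijection $\phi(a) = (a, a^2/n)$ with inverse $(x,y) \mapsto x$ written out more formally. The only difference is presentational: you make the bijection and its verification explicit, while the paper leaves the inverse and the composition checks implicit.
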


\begin{proof}
    Indeed, if $x^2 = 0$ in $\mathbb Z_n$, then one can choose a representative $0 \leq x < n$ for this solution. As $x^2 = ny$ for some $y \in \mathbb Z$, we have that
    $$0 \leq y < \frac{n^2}{n} = n.$$
    Also, if the pair $(x,y) \in [0, n-1]^2$ satisfies $x^2 = ny$ then for the corresponding class modulo $n$ one has $x^2 = 0$.
\end{proof}

Working with the conventions introduced in Section \ref{sectionpreliminaries}, we count how many zeros $(x,y)$ has the function:
$$f(n, x, y) = (x^2 - ny)^2$$
in the square $[0, t(n)-1]^2$ with $t(n) = n$.

\begin{lemma}
    For all $n \geq 1$ and for all $(x,y) \in [0, n-1]^2$
    one has:
    $$0 \leq (x^2 - ny)^2 < 2^{n+4}.$$
\end{lemma}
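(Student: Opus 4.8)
The plan is to observe that the lower bound $0 \le (x^2 - ny)^2$ is automatic, since the quantity is a perfect square, and to concentrate all the work on the upper bound. For the upper bound I would first pin down how large $|x^2 - ny|$ can be on the box $[0, n-1]^2$, then square, and finally reduce everything to a single polynomial-versus-exponential inequality in $n$.

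For the range of $x^2 - ny$, the constraints $0 \le x \le n-1$ and $0 \le y \le n-1$ give $0 \le x^2 \le (n-1)^2$ and $0 \le ny \le n(n-1)$, so that
$$-n(n-1) \le x^2 - ny \le (n-1)^2.$$
Since $(n-1)^2 \le n(n-1)$ for every $n \ge 1$, the negative extreme dominates, and therefore $|x^2 - ny| \le n(n-1) = n^2 - n$. Squaring yields $(x^2 - ny)^2 \le (n^2 - n)^2$, so it remains only to verify the clean inequality $(n^2 - n)^2 < 2^{n+4}$ for all $n \ge 1$.

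I expect this last step to be the main obstacle, and it is where genuine care is required. The tempting coarser estimate $|x^2 - ny| < n^2$, giving $(x^2 - ny)^2 < n^4$, is too weak: at $n = 5$ one has $n^4 = 625 > 512 = 2^{n+4}$, so the sharper bound $n^2 - n$ must be retained throughout. To prove $(n^2 - n)^2 < 2^{n+4}$ I would check the finitely many base cases $n = 1, \dots, 6$ directly (for instance $n = 5$ gives $400 < 512$ and $n = 6$ gives $900 < 1024$), and then argue by induction for $n \ge 6$. Writing $n^2 - n = n(n-1)$ and $(n+1)^2 - (n+1) = (n+1)n$, the inductive step reduces to $((n+1)n)^2 \le 2\,(n(n-1))^2$, i.e. to $(n+1)^2 \le 2(n-1)^2$, which rearranges to $n^2 - 6n + 1 \ge 0$; this holds for every $n \ge 6$ because $n^2 - 6n + 1 = n(n-6) + 1 \ge 1$. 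Combining the base cases with this step closes the induction, establishes the bound $(n^2-n)^2 < 2^{n+4}$, and hence proves the lemma.
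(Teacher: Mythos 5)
Your proof is correct and takes essentially the same route as the paper: both arguments reduce the problem to the maximum value $n^2(n-1)^2 = (n^2-n)^2$ of $(x^2-ny)^2$ on the box (attained at $(x,y)=(0,n-1)$) and then compare it with $2^{n+4}$. The only difference is in the last step, which the paper settles with ``by inspecting values, we observe that $n^2(n-1)^2 < 2^{n+4}$,'' whereas you prove that inequality rigorously --- checking $n \le 6$ directly and inducting via $(n+1)^2 \le 2(n-1)^2$ for $n \ge 6$ --- and you correctly note that the coarser bound $n^4$ would fail at $n=5$, so your write-up fills in a detail the paper leaves informal.
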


\begin{proof}
    We see that the expression $(x^2 - ny)^2$ takes the maximal value $n^2(n-1)^2$ for the pair $(0,n-1)$. By inspecting values, we observe that for all $n \geq 0$:
    $$n^2(n-1)^2 < 2^{n+4}$$
\end{proof}
According to this lemma, we can take $u(n) = n+4$.

\begin{theorem} \label{proof:chi}
\begin{align*}
\forall n \in \Z^+, \quad \chi(n) = \frac{\HW(M(f, n,  n, n+4))}{n+4} - n^2 .
\end{align*}
\end{theorem}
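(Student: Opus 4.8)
The plan is to recognize this statement as a direct instantiation of the hypercube counting theorem (\cref{proof:numberofsolutions}) applied to the specific polynomial $f(n,x,y) = (x^2 - ny)^2$, with $k = 2$ variables, box side $t(n) = n$, and bit bound $u(n) = n+4$. The entire content reduces to checking that the hypotheses of that theorem are satisfied and that the resulting solution count is exactly $\chi(n)$.

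First I would invoke the first lemma of this section, which identifies $\chi(n) = |\{(x,y) \in [0, n-1]^2 : x^2 = ny\}|$. Since $f(n,x,y) = (x^2 - ny)^2$ is a perfect square, it vanishes precisely when $x^2 = ny$; hence the zero set of $f$ in the box $[0, n-1]^2$ coincides with this solution set, and its cardinality is $\chi(n)$. This step is purely a matter of citing what has already been established.

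Next I would verify the three hypotheses (i)--(iii) needed to turn $M(f, n, n, n+4)$ into an arithmetic term. For (i) and (ii), I would expand $(x^2 - ny)^2 = x^4 - 2n\,x^2 y + n^2 y^2$ and observe that each summand is a monomial of the simple form $c\,x^{r_1} y^{r_2}$ with all exponential bases equal to $1$, and with coefficients $c \in \{1,\,-2n,\,n^2\}$ that are themselves arithmetic terms in $n$. For (iii), I would cite the bound lemma, which gives $0 \leq f(n,x,y) < 2^{n+4}$ throughout the box, so that $u(n) = n+4$ is a legitimate bit bound.

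With these checks in place, \cref{proof:numberofsolutions} applies verbatim with $k = 2$, $t(n) = n$, and $u(n) = n+4$, yielding that the number of zeros of $f$ in $[0, n-1]^2$ equals $\frac{\HW(M(f, n, n, n+4))}{n+4} - n^2$. Combining this with the identification of the zero count as $\chi(n)$ from the first step completes the argument. I do not anticipate a genuine obstacle; the only point demanding care is confirming that the expanded form of $f$ is truly \emph{simple} in $(x,y)$ with admissible coefficients, so that the construction of $M$ as an arithmetic term via the $\ATerm_k$ and $\CTerm_k$ terms is valid.
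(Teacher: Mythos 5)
Your proposal is correct and matches the paper's own treatment: the paper establishes exactly the two lemmas you cite (the identification $\chi(n) = |\{(x,y) \in [0,n-1]^2 : x^2 = ny\}|$ and the bound $0 \leq (x^2-ny)^2 < 2^{n+4}$), then obtains the theorem as a direct instantiation of the hypercube counting theorem with $k=2$, $t(n)=n$, $u(n)=n+4$, with the expansion $x^4 - 2nx^2y + n^2y^2$ spelled out in the appendix just as you describe. Your care about simplicity of the monomials and admissibility of the coefficients is precisely the check the paper's construction relies on.
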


An arithmetic term for $\omega(n)$  was computed by Prunescu and Shunia in \cite{prunescushunia2024primes}. Its construction is analogous with the term for $\chi(n)$ as the following steps show. As the Lemmas have similar proofs with the corresponding Lemmas above, we do not display these proofs.

\begin{lemma}
    $$2^{\omega(n) + 1} = |\{ a \in \mathbb Z_{4n} : a^2 = 1 \textup{  in } \mathbb Z_{4n} \} |  = |\{ (x,y) \in [0, 4n-1]^2 : x^2 = 4ny + 1\} |.$$
\end{lemma}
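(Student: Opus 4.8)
The first equality is precisely \cref{eqmodomega}, which has already been established, so the plan is to reduce the statement to that theorem and then prove only the second equality
$$|\{ a \in \mathbb Z_{4n} : a^2 = 1 \text{ in } \mathbb Z_{4n}\}| = |\{(x,y) \in [0,4n-1]^2 : x^2 = 4ny+1\}|.$$
To do this I would exhibit an explicit bijection between the two sets, mirroring the argument used for $\chi(n)$ in the preceding lemma, where the condition $x^2 = 0$ in $\mathbb Z_n$ was matched with pairs satisfying $x^2 = ny$ in the box $[0,n-1]^2$. Here the role of $n$ is played by $4n$ and the target residue $0$ is replaced by $1$.

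Given a class $a \in \mathbb Z_{4n}$ with $a^2 = 1$, I would take its unique representative $x \in \{0, 1, \dots, 4n-1\}$ and set $y := (x^2 - 1)/(4n)$, which is an integer precisely because $x^2 \equiv 1 \pmod{4n}$. To verify that $(x,y)$ lands in $[0,4n-1]^2$, I would first note that $x \neq 0$, since $0^2 = 0 \neq 1$ in $\mathbb Z_{4n}$ for $n \geq 1$; hence $x^2 \geq 1$ and $y \geq 0$. For the upper bound, $x \leq 4n-1$ gives $4ny + 1 = x^2 \leq (4n-1)^2$, which forces $y \leq 4n-2 < 4n$. Thus $(x,y) \in [0,4n-1]^2$ with $x^2 = 4ny+1$, exactly as the analogous estimate $0 \le y < n^2/n = n$ produced in the $\chi(n)$ case.

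Conversely, any pair $(x,y) \in [0,4n-1]^2$ satisfying $x^2 = 4ny+1$ yields $x^2 \equiv 1 \pmod{4n}$, so the residue class of $x$ solves $a^2 = 1$ in $\mathbb Z_{4n}$. These two assignments are mutually inverse — the representative $x$ determines the class, while $y = (x^2-1)/(4n)$ is uniquely recovered from $x$ — so the correspondence is a bijection and the cardinalities coincide. The computation is routine and I expect no serious obstacle; the only step demanding care is confirming that $y$ remains inside the box $[0,4n-1]$, where the lower bound $y \ge 0$ rests on the observation that $x = 0$ cannot occur and the upper bound follows from $x \le 4n-1$ just as before. This is exactly the verbatim analogue of the displayed $\chi(n)$ argument, which is why the authors omit it.
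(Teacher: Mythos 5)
Your proposal is correct and is essentially the paper's intended argument: the paper explicitly omits this proof as being ``similar'' to the preceding $\chi(n)$ lemma, and your bijection $a \mapsto (x, (x^2-1)/(4n))$ with the box bounds $0 \le y \le 4n-2$ is exactly that analogue, combined with citing \cref{eqmodomega} for the first equality. The only difference is that you spell out the lower-bound check ($x \neq 0$) that the paper's $\chi(n)$ template did not need, which is a point in your proof's favor rather than a deviation.
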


So we have to count how many zeros $(x,y)$ has the function:
$$g(n, x, y) = (x^2 - ny - 1)^2$$
in the square $[0, t(n)-1]^2$ with $t(n) = n$. After constructing a term that expresses this number of solutions, we will replace the argument $n$ with $4n$.

\begin{lemma}
    For all $n \geq 1$ and for all $(x,y) \in [0, n-1]^2$
    one has:
    $$0 \leq (x^2 - ny - 1)^2 < 2^{n+4}.$$
\end{lemma}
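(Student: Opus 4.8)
The statement to prove is an upper bound of the form
\[
0 \leq (x^2 - ny - 1)^2 < 2^{n+4}
\]
for all $n \geq 1$ and all $(x,y) \in [0,n-1]^2$. This is completely analogous to the preceding bound for $f(n,x,y)=(x^2-ny)^2$, so the plan is to mirror that proof exactly.

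First I would establish nonnegativity, which is free: the expression is a perfect square, so $0 \leq (x^2-ny-1)^2$ holds trivially. The content of the lemma is therefore entirely in the upper bound. Next I would bound the inner quantity $x^2 - ny - 1$ in absolute value over the box. Since $0 \leq x \leq n-1$, we have $0 \leq x^2 \leq (n-1)^2$, and since $0 \leq y \leq n-1$, we have $0 \leq ny \leq n(n-1)$. Thus the inner expression ranges between its most negative value, attained when $x^2$ is smallest and $ny$ largest (i.e. $x=0$, $y=n-1$), giving $-n(n-1)-1$, and its most positive value, attained when $x^2$ is largest and $y=0$ (i.e. $x=n-1$), giving $(n-1)^2 - 1$. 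Comparing magnitudes, $n(n-1)+1 \geq (n-1)^2 - 1$ for all $n \geq 1$, so the absolute value of the inner expression is maximized at the pair $(0, n-1)$, where $|x^2 - ny - 1| = n(n-1)+1 = n^2 - n + 1$.

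Squaring, the maximum of $(x^2-ny-1)^2$ over the box is $(n^2-n+1)^2$, and it remains to verify the elementary numerical inequality
\[
(n^2 - n + 1)^2 < 2^{n+4}
\]
for all $n \geq 1$. I would handle this by the same inspection-plus-growth argument used for the $f$ case: check the small values $n=1,2,3,\dots$ directly, and then observe that the left side is a fixed quartic polynomial in $n$ while the right side $2^{n+4} = 16 \cdot 2^n$ grows exponentially, so the inequality holds for all sufficiently large $n$ and hence, after confirming the finitely many small cases, for all $n \geq 1$. Note that $(n^2-n+1)^2$ is only slightly larger than the quantity $n^2(n-1)^2$ appearing in the analogous lemma for $f$, so essentially the same margin against $2^{n+4}$ suffices.

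The main (and only) obstacle is merely bookkeeping: one must confirm that the maximizing corner of the box is $(0,n-1)$ rather than $(n-1,0)$, since the ``$-1$'' shift breaks the perfect symmetry with the $f$ case and nudges the extremal value slightly upward. Once the maximum $(n^2-n+1)^2$ is correctly identified, the exponential-versus-quartic comparison is routine and the bound follows. This justifies taking $u(n) = n+4$ for the function $g$ exactly as for $f$, which is what the subsequent hypercube construction for $2^{\omega(n)+1}$ requires.
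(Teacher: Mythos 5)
Your proof is correct and follows exactly the approach the paper intends: the paper omits this proof as ``similar'' to the one for $f(n,x,y)=(x^2-ny)^2$, namely locating the maximum of the square over the box at the corner $(0,n-1)$ and then checking the resulting polynomial-versus-exponential inequality by inspection of small cases plus growth. Your only addition is the explicit (and correct) verification that the $-1$ shift makes $(0,n-1)$ beat $(n-1,0)$, giving the maximum $(n^2-n+1)^2 < 2^{n+4}$, which indeed holds for all $n \geq 1$ (the tightest case being $n=7$, where $1849 < 2048$).
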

In conclusion, one can take $u(n) = n+4$.

\begin{theorem} (Prunescu and Shunia, \citep[Theorem 5.1]{prunescushunia2024primes}) \label{proof:omegasolutions}
\begin{align*}
\forall n \in \Z^+, \quad
\omega(n) = \nu_2\left( \frac{\HW(M(g, 4n,  4 n, 4 n + 4))}{4 n + 4} - 16 n^2 \right) - 1 .
\end{align*}
\end{theorem}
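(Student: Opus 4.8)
The plan is to assemble this identity directly from the two preceding lemmas together with the hypercube counting principle of \cref{proof:numberofsolutions}, exactly mirroring the derivation of the $\chi(n)$ formula in \cref{proof:chi}. The key observation is that the preceding lemma identifies
$$2^{\omega(n)+1} = |\{(x,y) \in [0, 4n-1]^2 : x^2 = 4ny + 1\}|,$$
so the entire task reduces to producing an arithmetic term that counts these lattice points and then reading off the exponent via the $2$-adic valuation.

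First I would set up the counting for the function $g(n,x,y) = (x^2 - ny - 1)^2$ with $k = 2$ variables, box parameter $t(n) = n$, and bound parameter $u(n) = n + 4$, where the choice $u(n) = n+4$ is justified by the preceding bound lemma. Applying \cref{proof:numberofsolutions} with $f = g$ yields that the number of pairs $(x,y) \in [0, n-1]^2$ satisfying $x^2 = ny + 1$ equals
$$\frac{\HW(M(g, n, n, n+4))}{n+4} - n^2.$$

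Next I would perform the substitution $n \mapsto 4n$. Because the bound lemma holds for every $n \geq 1$, instantiating it at $4n$ confirms that $u(4n) = 4n+4$ remains a valid bound parameter over the enlarged box $[0, 4n-1]^2$; the substituted function $g(4n, x, y) = (x^2 - 4ny - 1)^2$ vanishes precisely when $x^2 = 4ny + 1$. Hence the count of such solutions in $[0, 4n-1]^2$ is
$$\frac{\HW(M(g, 4n, 4n, 4n+4))}{4n+4} - (4n)^2 = \frac{\HW(M(g, 4n, 4n, 4n+4))}{4n+4} - 16n^2,$$
and by the lemma this quantity equals $2^{\omega(n)+1}$.

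Finally, since the expression in parentheses is exactly the pure power of two $2^{\omega(n)+1}$, applying $\nu_2$ extracts its exponent, giving $\nu_2(2^{\omega(n)+1}) = \omega(n) + 1$; subtracting $1$ produces $\omega(n)$, which is the claimed identity. The only point requiring care is the $n \mapsto 4n$ substitution: one must check both that the bound parameter rescales consistently, so that $M$, $\HW$, and the division by $u$ together remain the arithmetic term furnished by \cref{proof:numberofsolutions}, and that the resulting count is genuinely a power of two with no spurious odd part, so that $\nu_2$ recovers the exponent cleanly rather than merely a lower bound. Both facts follow immediately from the exact equalities above, so the work is essentially assembly rather than a substantive obstacle; the genuine content was already discharged in \cref{eqmodomega} and the counting lemma.
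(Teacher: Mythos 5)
Your proposal is correct and follows exactly the route the paper takes: the lattice-point reformulation of $2^{\omega(n)+1}$, the bound $u(n)=n+4$, the hypercube counting term from \cref{proof:numberofsolutions}, the substitution $n \mapsto 4n$, and finally $\nu_2$ to extract the exponent. The paper merely sketches this assembly (citing the earlier Prunescu--Shunia work for the full details), but your derivation matches its construction step for step.
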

Here we recall that $\nu_2(c)$ is expressed by a term in $c$.

\section{The m-th root of n}\label{sectionmthrootofn}

Another function needed to construct the factoring formula is $ \left\lfloor \sqrt[m]{n} \right\rfloor $. This function is a weak point in our construction because the corresponding term is extremely complex.

{\bf Alternatively}, we might consider $ \left\lfloor \sqrt[m]{n} \right\rfloor $ as a {\bf primitive} function, and to present the factoring term as a composition and superposition of the operations of addition, modified subtraction, multiplication, division with remainder, arbitrary root extraction and exponentiation.

The term we present here follows the construction in Prunescu and Sauras-Altuzarra \citep[Theorem 8.3]{prunescu2024numbertheoreticfunctions}. We sketch here the construction of a slightly shorter term, with products of only $5$ G-functions and fewer variables.

We observe that:
$$\left\lfloor \sqrt[m]{n} \right\rfloor + 1 = |\{x \in [0,n] : x^m \leq n\}|.$$
The condition $x^m \leq n$ is equivalent with $n = x^m + y$ for some  integer $y \geq 0$.
This definition leads to a family of arithmetic terms, another one for every $m$. But we need just one term in variables $x$ and $m$. Therefore, we need a two-variable term expressing $x^m$. The classical arithmetic term representation for this function has been given in \cite{marchenkov2007superposition} by Marchenkov:
$$x^m = 2^{(xm+x+1)m} \bmod \left ( 2^{xm+x+1} - x \right ) $$
As this operation has to be unwinded in a logic formula, we prefer a simpler expression, that contains less additions.
\begin{lemma} For all $m > 0$ and $x \in \mathbb N$ one has
$$x^m = 2^{3m^2 x} \bmod ( 2^{3mx} \dot{-} x ).$$
\end{lemma}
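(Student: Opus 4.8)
The plan is to recognize the right-hand side as a base-$2^{3mx}$ congruence. Writing $b = 2^{3mx}$, I would first observe that the exponent factors as $3m^2 x = m \cdot 3mx$, so that $2^{3m^2 x} = b^m$. This immediately reduces the claim to computing $b^m \bmod (b \,\dot{-}\, x)$, and the whole identity becomes a statement about the residue of a power modulo a number that is just slightly smaller than its base.

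The key algebraic step is the following. Set $N = b \,\dot{-}\, x$. For $x \geq 1$ one has $2^{3mx} \geq 2^{3x} > x$, so the monus coincides with ordinary subtraction and $N = 2^{3mx} - x > 0$; the modulus is therefore well defined. Since $b = N + x$, we have $b \equiv x \pmod{N}$, and raising both sides to the $m$-th power yields $b^m \equiv x^m \pmod{N}$. Combined with the first step, this gives $2^{3m^2 x} \equiv x^m \pmod{N}$.

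To conclude that $x^m$ is actually the canonical residue, I must verify $0 \leq x^m < N$, which amounts to $x^m + x < 2^{3mx}$. Using $x < 2^x$ (hence $x^m < 2^{mx}$) together with the elementary bound $x \leq x^m$ valid for $x \geq 2$ and $m \geq 1$, I would estimate $x^m + x \leq 2x^m < 2^{mx+1} \leq 2^{3mx}$, where the last inequality holds because $mx + 1 \leq 3mx$ whenever $m, x \geq 1$. This pins $x^m$ into the range $[0, N)$, so $2^{3m^2 x} \bmod N = x^m$ exactly.

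The only remaining care concerns the boundary values, both of which are subsumed rather than exceptional. For $x = 0$ with $m > 0$ we have $0^m = 0$, while $N = 2^0 - 0 = 1$ and $2^0 \bmod 1 = 0$; for $x = 1$ we have $1^m = 1$, $N = 2^{3m} - 1 > 1$, and $2^{3m^2} = (2^{3m})^m \equiv 1 \pmod{N}$. I do not anticipate a genuine obstacle: the argument rests entirely on the single congruence $b \equiv x \pmod{N}$, and the only quantitative input is the estimate $x^m + x < 2^{3mx}$, whose sole role is to locate the residue in the correct interval.
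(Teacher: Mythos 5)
Your proof is correct and follows essentially the same route as the paper's: both reduce the claim to the congruence $2^{3m^2x} = (2^{3mx})^m \equiv x^m \pmod{2^{3mx}-x}$ and then pin down the canonical residue via the bound $x^m + x \le 2x^m < 2^{mx+1} \le 2^{3mx}$, with $x=0$ checked separately. The only cosmetic difference is that the paper phrases the congruence as the binomial expansion of $(x + (2^{3mx}-x))^m$, whereas you phrase it as $b \equiv x \pmod{N}$ raised to the $m$-th power; these are the same argument.
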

\begin{proof}
    For $m, x > 0$ we have that $2mx > 1$, so $3mx-1 > mx$, hence:
    $$2^{3mx-1} > 2^{mx} > x^m.$$
    So it follows that:
    $$2^{3mx}> 2x^m \geq x^m + x,$$
    $$2^{3mx} - x > x^m.$$
    So,
    $$2^{3m^2x} = ( x + 2^{3mx} - x)^m,$$
    $$2^{3m^2 x} \bmod ( 2^{3mx} - x ) = x^m.$$
    We observe that for $x = 0$ one gets $1 \bmod 1 = 0 = 0^m$. The formula works also for $x > 0$ and $m = 0$ because then $1 \dot{-} x = 0$ and by convention $1 \bmod 0 = 1$. But we will not use this situation, because $\sqrt[0]{n}$ does not make any sense.
\end{proof}
The exponentiation formula can be written as:
\[x^m = r \iff \exists \, (q, d) \in \mathbb{N}^2 :
    (2^{3m^2 x} = 2^{3mx} q - xq + r) \wedge (r + d + 1 = 2^{3mx} ).
\]
So we have to count the solutions $(x,y,r,q,d) \in \mathbb N^5$ for the exponential Diophantine equation:
$$(n-r-y)^2 + (2^{3m^2 x} - 2^{3mx} q + xq - r)^2 +(2^{3mx} - x - r - d - 1)^2 = 0$$
as a function of the parameters $m, n$.

As this point, we can transform this equation such that we diminish the number of monomials of the developed form.
The equation
$$(n-r-y)^2 = 0$$
has the same solutions $(r,y)$ like the equation
$$(2^n - 2^{r+y})^2 = 0$$
and can be substituted by the later. As well, if we introduce a new variable $z$ defined by the equation
$$(2^{3mx}  - z)^2 = 0$$
then, instead of
$$(z - x - r - d - 1)^2 = 0,$$
we can write only:
$$( 2^z - 2^{x + r + d + 1} )^2 = 0.$$
Finally, we introduce still other three variables $w$, $g$ and $s$ defined by the equations:
$$(w - xq)^2 = 0,$$
$$(2^{3mx} q - g )^2 = 0,$$
$$( 2^{3m^2 x} - s)^2 = 0.$$
Now the middle equation
$$(2^{3m^2 x} - 2^{3mx} q + xq - r)^2 =0$$
becomes
$$(s - g + w - r)^2 = 0$$
which is equivalent with:
$$( 2^{s + w} - 2^{q+r} )^2 = 0.$$
To sum up, we count the number of solutions $(x,y,z,w,g,s,r,q,d) \in \mathbb N^9$ satisfying
$$ (2^{3mx} - z)^2 + (2^{3mx} q - g )^2 + (w - xq)^2 + (2^{3m^2x} - s)^2 + $$$$ + (2^n - 2^{r+y})^2 + ( 2^{s + w} - 2^{z+r} )^2  + ( 2^z - 2^{x + r + d + 1} )^2 = 0.$$

The monomial expansion for this sum of squares is:
\begin{align*}
0 =
& 2^{2 n}
- 2 \cdot 2^{r} 2^{s} 2^{w} 2^{z}
- 4 \cdot 2^{d} 2^{r} 2^{x} 2^{z}
+ q^{2} x^{2}
+ 4 \cdot 2^{2 d} 2^{2 r} 2^{2 x}
- 2 \cdot 2^{n} 2^{r} 2^{y}
- 2 \cdot 2^{3 m x} g q
+ 2^{6 m x} q^{2} \\
& - 2 q w x
+ 2^{2 s} 2^{2 w}
+ 2^{2 r} 2^{2 y}
+ 2^{2 r} 2^{2 z}
+ g^{2}
- 2 \cdot 2^{3 m^{2} x} s
+ s^{2}
+ w^{2}
- 2 \cdot 2^{3 m x} z
+ z^{2}
+ 2^{6 m^{2} x} \\
& + 2^{6 m x}
+ 2^{2 z} .
\end{align*}

\begin{lemma} \label{lemma:solutions}
    For all $m,n \geq 1$, the solutions $(x,y,z,w, g, s,r,q,d)$ of the equation $$h(m,n,x,y,z,w,g,s,r,q,d) = 0$$ belong to the cube:
    $$[0, 2^{6n^3m^2}]^9.$$
\end{lemma}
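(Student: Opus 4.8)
The plan is to bound each of the nine variables separately, by tracing through the chain of defining equations and showing that each variable is pinned down (or bounded above) once we fix $x, m, n$ and use $x \leq n$. The key observation is that in any solution, $x$ is the base of the root we are extracting, so $x \leq \lfloor\sqrt[m]{n}\rfloor \leq n$, and every other variable is ultimately a polynomial or exponential expression in $x, m, n$ evaluated at this solution. I would work through the seven squared terms of $h$ in a logical order that respects the dependency structure: each auxiliary variable $z, g, w, s, r, q, d, y$ was introduced precisely to abbreviate one subexpression, so I can read off its value in a genuine solution.

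\emph{First I would establish the chain of equalities.} In any zero of $h$, the term $(2^{3mx} - z)^2 = 0$ forces $z = 2^{3mx}$; the term $(2^{3m^2x} - s)^2 = 0$ forces $s = 2^{3m^2x}$; and the equation $x^m = r$ together with the bound $x \leq n$ gives $r = x^m \leq n^m$. From $(2^n - 2^{r+y})^2 = 0$ we get $r + y = n$, hence $y = n - r \leq n$. The quotient $q$ in the division $2^{3m^2x} = (2^{3mx} - x)q + r$ satisfies $q \leq 2^{3m^2x}$, whence $g = 2^{3mx}q \leq 2^{3mx} \cdot 2^{3m^2x}$ and $w = xq \leq x \cdot 2^{3m^2x}$. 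Finally, from $(2^z - 2^{x+r+d+1})^2 = 0$ we read $z = x + r + d + 1$, so $d = z - x - r - 1 = 2^{3mx} - x - r - 1 \leq 2^{3mx}$.

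\emph{Then I would verify each bound lies below the common ceiling.} Using $x, m, n \geq 1$ and $x \leq n$, the largest exponents appearing are of the form $3m^2x \leq 3m^2 n$, and in the products $g$ and $w$ the exponents add, giving at most $3mx + 3m^2x \leq 6m^2 n$. Since we also have factors like $x \leq n$ or $q \leq 2^{3m^2x}$ multiplying these, a crude uniform bound absorbs everything into an expression of the form $2^{6n^3m^2}$, where the extra powers of $n$ provide ample slack to dominate any polynomial prefactor (such as the $x$ in $w = xq$ or the sum of two exponentials in $g$). Concretely, each variable is at most $2^{6m^2 n} \cdot (\text{polynomial in } n)$, and $6n^3 m^2 \geq 6m^2 n + O(\log n)$ for all $n \geq 1$, so every coordinate lies in $[0, 2^{6n^3m^2}]$.

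\emph{The main obstacle} is not the individual bounds, which are routine, but ensuring the stated exponent $6n^3m^2$ is genuinely large enough to cover the \emph{compound} quantities $g$ and $w$ simultaneously with adequate margin for their polynomial prefactors, rather than merely the atomic quantities $z$, $s$, $r$. I would therefore pay closest attention to $g = 2^{3mx}q \leq 2^{3mx} \cdot 2^{3m^2x} = 2^{3mx(1+m)}$, checking that $3mx(1+m) \leq 6m^2 x \leq 6m^2 n \leq 6n^3 m^2$ for $m \geq 1$, and similarly that the polynomial factor in $w = xq \leq n \cdot 2^{3m^2 n}$ is absorbed because $2^{3m^2 n} \cdot n \leq 2^{3m^2 n + \log_2 n} \leq 2^{6n^3 m^2}$. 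Once these two compound cases clear the ceiling with room to spare, all remaining variables follow \emph{a fortiori}.
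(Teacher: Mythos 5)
Your proposal is correct and takes essentially the same route as the paper: in both, each of the nine variables is bounded individually by exploiting the division-with-remainder structure (quotient and remainder do not exceed the dividend, the auxiliary variables $z, s, g, w, d$ are read off from their defining squared terms), and the compound quantities $g = 2^{3mx}q$ and $w = xq$ are then checked against the ceiling $2^{6n^3m^2}$. If anything, your explicit verification that $3mx(1+m) \leq 6m^2n$ and that the polynomial prefactor $n$ is absorbed is slightly more careful than the paper's terse conclusion, which states the same per-variable bounds (with a couple of typographical slips in the intermediate estimates) and ends with $n \cdot 2^{6n^2m^2} < 2^{6n^3m^2}$.
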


\begin{proof}
    This bound is based on the fact that for a division with remainder, both the quotient and the remainder are smaller than the number to be divided. We see that:
    $$ q < 2^{3m^2n},$$
    $$r \leq n,$$
    $$x \leq n,$$
    $$ y \leq n, $$
    $$ d < 2^{3mn},$$
    $$ z = 2^{3mx} \leq 2^{3mn},$$
    $$ w = xq \leq n \cdot 2^{3mn}.$$
    $$ g =  2^{3mx} q \leq n \cdot 2^{3mn} \cdot 2^{3mn} = n \cdot 2^{6mn},$$
    $$ s = 2^{3m^2x} \leq 2^{3m^2n}.$$

    So all unknowns are smaller or equal than
    $$n \cdot 2^{6n^2m^2} < 2^{6n^3m^2}.$$
    \end{proof}

Accordingly, we take $t(m,n) = 2^{6n^3m^2}. $

In order to generate less monomials, we replace $( 2^s - 2^{z+w+r} )^2$ with $( 1 - 2^{z+w+r-s} )^2$ and $( 2^z - 2^{x + r + d + 1} )^2$ with $( 1 - 2 \cdot 2^{x + r + d - z} )^2$. But these expressions take some values which are not integers if $z, w, r, s \in [0, t(m,n)]$, respectively $x, r, d, z \in [0, t(m,n)]$. In order to prevent this,
we will consider the squared binomials: $( 2^{2t(m,n)} - 2^{2t(m,n)} \cdot 2^{z+r-s-w} )^2$, respectively with $ ( 2^{t(m,n)} - 2 \cdot 2^{t(m,n)} \cdot 2^{x + r + d - z} )^2$. It is clear that these squared binomials take always positive integers as values, and that they have the same zeros as the initial squared binomials. From every binomial, after squaring, the monomial depending on $m, n$ only will contribute to the free term, so instead of $21$ monomials, we get only $19$. We denote with $h'(m,n,t, x,y,z,w,g, s,r,q,d)$ the function:
$$h'(m,n,t, x,y,z,w,g, s,r,q,d) =$$
$$ (2^{3mx} - z)^2 + (2^{3mx} q - g )^2 + (w - xq)^2 + (2^{3m^2x} - s)^2 + $$$$ + (2^n - 2^{r+y})^2 + ( 2^{2t} - 2^{2t} \cdot 2^{z+r-s-w} )^2  + ( 2^{t} - 2 \cdot 2^{t} \cdot 2^{x + r + d - z} )^2.$$

\begin{lemma}
    For all $t > m,n \geq 1$, and for all $(x,y,z,w,s,r,q,d) \in [0, t]^9 $, one has that:
    $$0 \leq h'(m,n,t,x,y,z,w,g, s,r,q,d) < 2^{6t^3 + 5},$$
    and all values of $h$ are integers.
\end{lemma}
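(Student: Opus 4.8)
The plan is to establish the three assertions separately, with non-negativity being immediate: since $h'$ is a sum of squares of integers, it is automatically $\geq 0$, so the real work is (a) integrality and (b) the upper bound $2^{6t^3+5}$. Throughout I would use that all nine coordinates $x,y,z,w,g,s,r,q,d$ lie in $[0,t]$ and that $m,n < t$; note also that $t > m \geq 1$ with $m,t$ natural forces $t \geq 2$, a fact I will need for the upper bound.

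For integrality, the only summands that could fail to be integers are the two reshaped squares $(2^{2t} - 2^{2t}\cdot 2^{z+r-s-w})^2$ and $(2^t - 2\cdot 2^t \cdot 2^{x+r+d-z})^2$, whose inner powers carry potentially negative exponents. I would rewrite $2^{2t}\cdot 2^{z+r-s-w} = 2^{2t+z+r-s-w}$ and $2\cdot 2^t\cdot 2^{x+r+d-z} = 2^{1+t+x+r+d-z}$ and then check that both exponents are non-negative on the cube $[0,t]^9$: using $z,r \geq 0$ and $s,w \leq t$ gives $2t+z+r-s-w \geq 0$, while $z \leq t$ and $x,r,d \geq 0$ gives $1+t+x+r+d-z \geq 1 > 0$. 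Hence each base is a genuine integer, and therefore so is $h'$.

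For the upper bound I would bound each of the seven squared summands individually. The dominant one is $(2^{3m^2x} - s)^2$: since $3m^2x < 3t^3$ we have $2^{3m^2x} < 2^{3t^3}$, so $\abs{2^{3m^2x} - s} < 2^{3t^3}+t < 2^{3t^3+1}$ and the square is $< 2^{6t^3+2}$. All remaining summands are of strictly smaller order: $(2^{3mx}-z)^2$ and $(2^{3mx}q - g)^2$ contribute at most $2^{6t^2+2}$ and $2^{6t^2+2t+2}$, the polynomial square $(w-xq)^2 \leq t^4$, the square $(2^n - 2^{r+y})^2 < 2^{4t+2}$, and the two reshaped squares have inner exponents bounded by $4t+1$, hence are $< 2^{8t+4}$. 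For $t \geq 2$ each of these quantities is also $< 2^{6t^3+2}$ (for instance $8t+4 < 6t^3$ once $t \geq 2$). Summing the seven summands, each below $2^{6t^3+2}$, yields $h' < 7\cdot 2^{6t^3+2} < 2^3\cdot 2^{6t^3+2} = 2^{6t^3+5}$, as claimed.

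The main obstacle — indeed the only delicate point — is the two reshaped squares, where one must simultaneously confirm that the exponents remain non-negative (for integrality) and that their magnitude, which is of order $2^{8t}$, stays below the dominant scale $2^{6t^3}$; the latter is exactly where the hypothesis $t \geq 2$ is essential. Everything else reduces to routine exponent arithmetic, and the gap between the dominant bound $2^{6t^3+2}$ and the claimed $2^{6t^3+5}$ is precisely the factor $2^3 = 8 > 7$ that absorbs the number of summands.
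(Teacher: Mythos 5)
Your proof is correct and takes essentially the same route as the paper: both arguments identify the fourth square $(2^{3m^2x}-s)^2$ as the dominant term (of size at most roughly $2^{3t^3}$, using $m<t$, $x\le t$, hence $t\ge 2$), bound each of the seven squares by $4\cdot 2^{6t^3}=2^{6t^3+2}$, and absorb the count of seven summands into the remaining slack ($7\cdot 4 = 28 \le 2^5$ in the paper, $7 < 2^3$ in yours). The only differences are cosmetic: the paper majorizes globally by replacing every minus sign with a plus and substituting $t$ for all letters, whereas you bound term by term; and you verify the integrality of the two reshaped squares inside the proof, a point the paper settles in the prose preceding the lemma rather than in the proof itself.
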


\begin{proof}
    In the expression $h(m,n,t,x,y,z,w,g, s,r,q,d)$ we replace every minus sign with a plus sign. This operation majorizes the expression. Now we replace all letters, unknowns and parameters, with $t$. It is clear that the fourth squared binomial contains the biggest quantity, which is
    $$2^{3t^3}.$$
    So the content of any of the squared terms is smaller than
    $$2 \cdot 2^{3t^3}, $$
    and any of the seven squared terms is smaller than
    $$4 \cdot 2^{6t^3}.$$
    If we replace all seven terms with the biggest term, we find that their sum is smaller than:
    $$7 \cdot 4 \cdot 2^{6t^3} = 28 \cdot 2^{6t3} \leq 2^{6t^3 + 5}.$$
\end{proof}

So we can take $u(t) = 6t^3 + 5$. Also, for constructing the arithmetic term, we have to replace the parameter $t$ occurring inside the expression $h$ with $t(m,n)$. The expression used will be the following:
$$h''(m,n,x,y,z,w,g,s,r,q,d) =$$
$$=(2^{3mx} - z)^2 + (2^{3mx} q - g )^2 + (w - xq)^2 + ( 2^{3m^2x} - s)^2 + $$$$ + (2^n - 2^{r+y})^2 + ( 2^{2t(m,n)} - 2^{2t(m,n)} \cdot 2^{z+r-s-w} )^2  + ( 2^{t(m,n)} - 2 \cdot 2^{t(m,n)} \cdot 2^{x + r + d - z} )^2.$$

The monomial expansion is:
\begin{align*}
& h''(m,n,x,y,z,w,g,s,r,q,d) =
2^{2  n}
+ q^{2} x^{2}
- 2 \cdot 2^{n} 2^{r} 2^{y}
- \frac{4 \cdot 2^{2 \cdot 2^{6  m^{2} n^{3}}} 2^{d} 2^{r} 2^{x}}{2^{z}}
- 2 \cdot 2^{3  m x} g q
+ 2^{6  m x} q^{2} \\
& - 2  q w x + 2^{2  r} 2^{2  y}
+ \frac{4 \cdot 2^{2 \cdot 2^{6  m^{2} n^{3}}} 2^{2  d} 2^{2  r} 2^{2  x}}{2^{2  z}}
+ g^{2}
+ s^{2}
+ w^{2}
+ z^{2}
- 2 \cdot 2^{3  m^{2} x} s
- 2 \cdot 2^{3  m x} z
+ 2^{6  m^{2} x} + 2^{6  m x} \\
& + 2^{4 \cdot 2^{6  m^{2} n^{3}}}
+ 2^{2 \cdot 2^{6  m^{2} n^{3}}}
- \frac{2 \cdot 2^{4 \cdot 2^{6  m^{2} n^{3}}} 2^{r} 2^{z}}{2^{s} 2^{w}}
+ \frac{2^{4 \cdot 2^{6  m^{2} n^{3}}} 2^{2  r} 2^{2  z}}{2^{2  s} 2^{2  w}}
\end{align*}

\begin{theorem}\label{newmrootofn}
\begin{align*}
\forall n \in \Z^+, \quad
 \left\lfloor \sqrt[m]{n} \right\rfloor = \left ( \frac{\HW(M(h'', m,  n,  t(m, n), u(t(m, n))))}{u(t(m,n))} - t(m,n)^9 \right) - 1 ,
\end{align*}
\end{theorem}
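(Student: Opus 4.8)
The plan is to reduce the claim to a lattice-point count and then invoke the hypercube method. The starting observation is the identity $\floor{\sqrt[m]{n}} + 1 = |\{x \in [0,n] : x^m \le n\}|$ already recorded above, together with the fact that $x^m \le n$ holds precisely when $n = x^m + y$ for some $y \ge 0$. Thus it suffices to prove two things: first, that the number of solutions $(x,y,z,w,g,s,r,q,d) \in [0, t(m,n)]^9$ of $h'' = 0$ equals $\floor{\sqrt[m]{n}} + 1$; and second, that this number of solutions is computed by the hypercube formula $\frac{\HW(M(h'', m, n, t(m,n), u(t(m,n))))}{u(t(m,n))} - t(m,n)^9$. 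Subtracting $1$ then yields $\floor{\sqrt[m]{n}}$.

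For the first point I would argue that the solution tuples are in bijection with the admissible base values $x \in \{0, 1, \dots, \floor{\sqrt[m]{n}}\}$. By the exponentiation identity $x^m = 2^{3m^2 x} \bmod (2^{3mx} \dot{-} x)$, the pair $(q,r)$ furnished by the division with remainder is uniquely determined by $x$, with $r = x^m$ and $q$ the quotient; the slack variable $d$ is then forced by $r + d + 1 = 2^{3mx} - x$, and $y$ by $y = n - r$, which is a nonnegative integer exactly when $x^m \le n$. The auxiliary variables introduced to reduce the monomial count, namely $z = 2^{3mx}$, $w = xq$, $g = 2^{3mx} q$, and $s = 2^{3m^2 x}$, are each pinned to a single value by their defining squared binomials. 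Hence every admissible $x$ extends to exactly one $9$-tuple and conversely, so the solution count is exactly the number of admissible $x$, namely $\floor{\sqrt[m]{n}} + 1$.

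For the second point I would verify the hypotheses of \cref{proof:numberofsolutions} for $k = 9$, parameter vector $(m,n)$, box parameter $t(m,n) = 2^{6 n^3 m^2}$, and bit parameter $u(t(m,n)) = 6\,t(m,n)^3 + 5$. The function $h''$ is a sum of squares and, by its monomial expansion, a simple exponential polynomial in the nine unknowns whose coefficients and exponentiation bases are themselves arithmetic terms in $m$ and $n$. The containment of all genuine solutions inside the box is \cref{lemma:solutions}, and the bound $0 \le h'' < 2^{u(t(m,n))}$ follows from the preceding bit-length lemma after substituting $t \mapsto t(m,n)$. With these hypotheses in place, \cref{proof:numberofsolutions} states that the displayed $\HW$-expression equals the number of zeros of $h''$ in the box, completing the proof.

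The main obstacle I anticipate is the bookkeeping around the monomial-reducing rewrites rather than any deep difficulty. Replacing a squared difference such as $(n - r - y)^2$ by $(2^n - 2^{r+y})^2$, or $(s - g + w - r)^2$ by the scaled form $(2^{2t} - 2^{2t} \cdot 2^{z + r - s - w})^2$, relies on the equivalence $2^a = 2^b \iff a = b$, which is valid only when the exponents involved are nonnegative integers. The care point is therefore to confirm that for every tuple in $[0, t(m,n)]^9$ the exponents appearing inside these powers stay nonnegative — this is exactly why the factors $2^{t(m,n)}$ and $2^{2t(m,n)}$ were inserted — so that the rewritten binomials are genuine nonnegative integers with precisely the same zero set as the originals, leaving the solution count unchanged and the hypercube accounting intact.
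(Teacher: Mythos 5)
Your proposal is correct and takes essentially the same route as the paper: the theorem there is the culmination of \cref{sectionmthrootofn}, whose development is exactly your argument --- the single-fold division-with-remainder encoding of $x^m$ pinning each admissible base $x \le \floor{\sqrt[m]{n}}$ to a unique $9$-tuple (so the solution count is $\floor{\sqrt[m]{n}}+1$), the box bound of \cref{lemma:solutions}, the magnitude bound giving $u(t) = 6t^3+5$, and finally \cref{proof:numberofsolutions}. Your closing ``care point'' about needing the factors $2^{t(m,n)}$ and $2^{2t(m,n)}$ to keep the rewritten exponents nonnegative and the values integral is precisely the paper's stated reason for introducing them.
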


Only one remark about how to apply the construction method in this case. Here, not only some coefficients of the exponential Diophantine equation are arithmetic terms depending on the parameters, but also some bases of exponentiation. Such a monomial is:
$$ 2^{6m^2x} =  {\left( 2^{6m^2} \right ) }^x.  $$
The nontrivial contributions of such monomials go in  functions of type $G_0$.

In \cite{shunia2024polynomial}, Shunia conjectured the formula:

\begin{conjecture} (Shunia, \cite{shunia2024polynomial}) \label{proof:roots}
Let $n,m \in \mathbb{N}$ such that $n > 2$, $\floor{\log_2(n)} + 1 \geq m > 1$, and $\not\exists k \in \mathbb{N} \, | \, k^m = n$. Then
\begin{align*}
\floor{\sqrt[m]{n}}
&= \floor{\frac{(n^{2nm} + 1)^{2nm+1} \bmod{(n^{2nm^2}-n)}}{(n^{2nm} + 1)^{2nm} \bmod{ (n^{2nm^2}-n)}} - 1} .
\end{align*}
\end{conjecture}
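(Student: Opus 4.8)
The plan is to read $B := n^{2nm}$ as a \emph{formal} $m$-th root of $n$ modulo $M := n^{2nm^2} - n = B^m - n$, and to track how the true root $\rho := \sqrt[m]{n}$ is encoded in the base-$B$ digits of the two modular powers. The starting point is the polynomial remainder $P_K(x) := (x+1)^K \bmod (x^m - n)$ in $\mathbb{Z}[x]$; writing $P_K(x) = \sum_{s=0}^{m-1} c_{K,s}\,x^s$ and reducing $x^{qm+s}\equiv n^q x^s$ gives the nonnegative coefficients $c_{K,s} = \sum_{q\ge 0}\binom{K}{qm+s} n^q$. First I would establish the \textbf{no-carry bound}: for $K\in\{2nm,\,2nm+1\}$ one has $0\le P_K(B) < M$, so that evaluating the polynomial identity $(x+1)^K = Q(x)(x^m-n)+P_K(x)$ at $x=B$ yields $(B+1)^K \bmod M = P_K(B)$. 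This is where the enormous base $B=n^{2nm}$ pays off: each $c_{K,s}$ is dominated by $(1+\rho)^K$, which the hypotheses $n\ge 2^{m-1}$ and $m>1$ force to be exponentially smaller than $B$, so the digits never overflow. The handful of small $n$ left uncovered by the asymptotic bound can be checked directly, as in the verification of \cref{sectionsourcecode}.

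Writing $c_s := c_{2nm,s}$ for the denominator digits $D := (B+1)^{2nm}\bmod M = \sum_s c_s B^s$ and using Pascal's rule to express the numerator digits $c_{2nm+1,s}$ through the $c_s$, I would next derive the exact algebraic identity
\begin{align*}
N := (B+1)^{2nm+1}\bmod M = (B+1)D - c_{m-1}M ,
\end{align*}
whence $N/D - 1 = B - c_{m-1}M/D$. Expanding $D = c_{m-1}B^{m-1}+\sum_{s\le m-2}c_sB^s$ and $M = B^m - n$ and simplifying gives the clean asymptotic
\begin{align*}
\frac{N}{D} - 1 = \frac{c_{m-2}}{c_{m-1}} + O\!\left(\tfrac{1}{B}\right),
\end{align*}
which reduces the whole problem to the ratio of two consecutive leading digits. (Because $\floor{x-1}=\floor{x}-1$ identically, this analysis is insensitive to whether the inner bar denotes real or integer division.)

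The decisive step is to show that $c_{m-2}/c_{m-1}$ is astronomically close to $\rho$. Setting $\sigma_s := \sum_{j\equiv s \,(m)}\binom{2nm}{j}\rho^{\,j}$ and observing $c_s = \rho^{-s}\sigma_s$, one gets $c_{m-2}/c_{m-1} = \rho\cdot(\sigma_{m-2}/\sigma_{m-1})$, so it suffices that adjacent $m$-sections of $(1+\rho)^{2nm}$ be nearly equal. The roots-of-unity filter $\sigma_s = \frac1m\sum_{\zeta^m=1}\zeta^{-s}(1+\zeta\rho)^{2nm}$ makes this transparent: the $\zeta=1$ term contributes $\frac1m(1+\rho)^{2nm}$ to every section, while each remaining term is smaller by a factor $\bigl(|1+\zeta\rho|/(1+\rho)\bigr)^{2nm}\le (1-\delta)^{nm}$ with $\delta = 2\rho(1-\cos(2\pi/m))/(1+\rho)^2$. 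Hence $\sigma_{m-2}/\sigma_{m-1} = 1 + O\!\bigl((1-\delta)^{nm}\bigr)$, an error exponentially small in a positive power of $n$.

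I would close by comparing this error to the gap between $\rho$ and the nearest integer. Because $n$ is not a perfect $m$-th power, $\rho$ is irrational, and in the worst case $n = r^m\pm 1$ one still has $\min(\rho-\floor{\rho},\,\lceil\rho\rceil-\rho) \gg 1/(mn)$. Since both the $O(1/B)$ evaluation error and the $O((1-\delta)^{nm})$ section error are far below this polynomial gap, $N/D-1$ lands in the same unit interval as $\rho$, giving $\floor{N/D - 1} = \floor{\rho} = \floor{\sqrt[m]{n}}$. I expect the \textbf{main obstacle} to be the no-carry bound $P_K(B)<M$ uniformly under the stated hypotheses, rather than the asymptotic heart of the argument: the digit sizes and the modulus are both powers of $n$ whose exponents are close for small $m$, so the inequality is genuinely tight near the boundary $n\approx 2^{m-1}$ and must be married to a direct check of the finitely many small cases. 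A secondary subtlety is making the two error bounds fully explicit so as to certify, for every admissible pair $(n,m)$, that the accumulated error stays strictly below the integer gap.
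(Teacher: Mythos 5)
The paper contains no proof of this statement for you to be compared against: it is reproduced as an open conjecture of Shunia from \cite{shunia2024polynomial}, accompanied only by the remark that a proof would simplify the paper's main result, and the root-extraction term the paper actually relies on is the separate hypercube construction of \cref{newmrootofn}. So your proposal has to be judged on its own, as an attempted resolution of the conjecture.

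Judged that way, your plan is sound and I found no step that fails. The identities at its heart all check out: with $B = n^{2nm}$ and $M = B^m - n$, the reduction $(x+1)^K \bmod (x^m-n) = \sum_{s<m} c_{K,s}x^s$ with $c_{K,s} = \sum_{q}\binom{K}{qm+s}n^q \geq 0$; the recurrence $P_{K+1}(x) = (x+1)P_K(x) - c_{K,m-1}(x^m - n)$, which, granted the no-carry bounds $P_K(B), P_{K+1}(B) < M$ for both exponents, gives exactly $N = (B+1)D - c_{m-1}M$ and hence $N/D - 1 = c_{m-2}/c_{m-1}$ up to an error that is polynomial in $n$ divided by $B$; the rewriting $c_{m-2}/c_{m-1} = \rho\,\sigma_{m-2}/\sigma_{m-1}$ with the roots-of-unity filter forcing $\sigma_{m-2}/\sigma_{m-1} - 1$ to be exponentially small in a positive power of $n$ (your $(1-\delta)^{nm}$ bound); and the lower bound $\gg 1/(mn)$, via the mean value theorem, on the distance from $\rho$ to the nearest integer when $n$ is not a perfect $m$-th power. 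A spot check at $(n,m)=(3,2)$ confirms the mechanism: $c_0 = 86464$, $c_1 = 49920$, both modular powers reduce without carries (the numerator fits below $M = 3^{24}-3$ with only a factor of about $4$ to spare), and $c_0/c_1 = 1.73205\ldots$ matches $\sqrt{3}$. Your diagnosis of the remaining difficulty is also essentially accurate, with one refinement: for $m \geq 3$ the hypothesis $n \geq 2^{m-1}$ leaves exponential room in the no-carry inequality (essentially because $\log_2 3 < m - 1$), so the genuinely tight regime is $m = 2$ with small $n$, where crude estimates like $(1+\rho)^K \leq (2\rho)^K$ are false to the purpose and one needs the sharper $(1+\rho)^K \leq n^{K/m}e^{K/\rho}$ together with a direct check of stragglers such as $(3,2)$. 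What separates your text from a complete proof is exactly this bookkeeping: uniform explicit constants for the no-carry bound and for the two error terms, and an explicit finite list of small pairs to verify. That is labor rather than a missing idea, but until it is carried out the statement should still be regarded as open.
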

If proven, this term could significantly simplify the symbolic representation of our main result.

\section{A formula for factoring n}\label{sectionfactoringterm}

In this section, we synthesize the arithmetic terms developed in \cref{sectiontwoquadratic}--\cref{sectionmthrootofn} to construct our main result: An arithmetic term $T(n)$ that yields a non-trivial divisor of any composite integer $n > 1$. Unlike the factorial-unwinding approaches of \cref{sectiondirectapplications}, this construction combines the quadratic residue-based invariants $\chi(n)$ (the largest square divisor) and $\omega(n)$ (the number of distinct prime divisors) with root extraction and greatest common divisor operations. By avoiding repeated factorial constructions, $T(n)$ achieves a comparatively more efficient structure, though it remains computationally infeasible for large $n$ in absolute terms. We also present an alternative formulation $U(n)$ using the truncated difference operation.

\begin{lemma} \label{proof:divisorrootbounds}
Let $n = p_1 p_2 \cdots p_k$ be a composite, squarefree integer with $k = \omega(n)$ distinct prime factors, where $p_1 < p_2 < \dots < p_k$. Then $p_1 \leq \floor{\sqrt[k]{n}} < p_k$.
\end{lemma}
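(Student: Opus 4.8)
The plan is to establish the two inequalities separately, reducing each to an elementary comparison of products after eliminating the floor and the root. Both reductions rest on the standard fact that for a real $x$ and an integer $m$, one has $m \le \floor{x}$ iff $m \le x$, and $\floor{x} < m$ iff $x < m$; together with the monotonicity of $t \mapsto t^k$ on the nonnegative reals, this converts each claim into a statement about $n$ versus a $k$-th power. I would first record that, since $n$ is composite and squarefree, it has at least two distinct prime factors, so $k = \omega(n) \ge 2$ and in particular $p_1 < p_k$ strictly; this strictness is what the upper bound will need.

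For the lower bound, I would argue that because $p_i \ge p_1$ for every $i$, the factorization $n = p_1 p_2 \cdots p_k$ gives $n \ge p_1^{\,k}$. Taking $k$-th roots yields $p_1 \le \sqrt[k]{n}$, and since $p_1$ is an integer not exceeding $\sqrt[k]{n}$, it cannot exceed the greatest such integer, so $p_1 \le \floor{\sqrt[k]{n}}$.

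For the upper bound, I would use the complementary estimate $p_i \le p_k$ for all $i$ together with the strict inequality $p_1 < p_k$. Grouping the factors, one gets
\[
n = p_1 \left( p_2 \cdots p_{k-1} \right) p_k \le p_1 \, p_k^{\,k-1} < p_k \cdot p_k^{\,k-1} = p_k^{\,k},
\]
where the strict step is exactly where $p_1 < p_k$ (hence $k \ge 2$) is invoked. Consequently $\sqrt[k]{n} < p_k$, and since $p_k$ is an integer, $\floor{\sqrt[k]{n}} \le \sqrt[k]{n} < p_k$ forces $\floor{\sqrt[k]{n}} < p_k$.

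The argument is essentially a pair of one-line product comparisons, so there is no genuine analytic obstacle. The only point demanding care is the upper bound: the inequality must be strict, which is guaranteed solely by the primes being \emph{distinct} (so that at least one factor is strictly below $p_k$). I would therefore be explicit that the hypotheses ``composite'' and ``squarefree'' are used precisely to secure $k \ge 2$ and $p_1 < p_k$, ensuring the strictness of $\floor{\sqrt[k]{n}} < p_k$ rather than merely $\le$.
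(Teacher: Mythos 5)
Your proof is correct in every step: the lower bound via $n \ge p_1^k$, the strict upper bound via $n \le p_1 p_k^{k-1} < p_k^k$, and the careful handling of where compositeness and squarefreeness are used (to guarantee $k \ge 2$ and hence $p_1 < p_k$, which is what makes the second inequality strict). For comparison, the paper states this lemma \emph{without any proof at all}, treating it as evident; your argument is precisely the elementary product-comparison justification the authors leave implicit, so it fills a genuine (if small) gap in the exposition rather than deviating from a published approach.
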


\begin{theorem} \label{proof:sffactor}
Let $n \in \mathbb{N}$ such that $n$ is composite and squarefree. Then a non-trivial divisor of $n$ is given by:
\begin{align*}
q = \gcd(n, \floor{\sqrt[\omega(n)]{n}}!) .
\end{align*}
\end{theorem}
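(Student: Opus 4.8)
The plan is to reduce the statement to a clean divisibility bookkeeping argument built on top of \cref{proof:divisorrootbounds}. Write $n = p_1 p_2 \cdots p_k$ with $k = \omega(n)$ distinct primes $p_1 < p_2 < \dots < p_k$ (here $k \geq 2$ since $n$ is composite and squarefree), and set $r := \floor{\sqrt[\omega(n)]{n}}$. The engine of the whole proof is \cref{proof:divisorrootbounds}, which gives $p_1 \leq r < p_k$; everything else is an elementary consequence of this sandwich.

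First I would record how primes interact with a factorial: for any prime $p$, one has $p \mid r!$ if and only if $p \leq r$ (one direction is immediate since $p$ then appears as a literal factor in $r! = 1 \cdot 2 \cdots r$, and the other because every factor of $r!$ is at most $r$). Next I would exploit squarefreeness to pin down the greatest common divisor exactly. Since $n$ is squarefree, $v_{p_i}(n) = 1$ for each $i$, so the $p_i$-adic valuation of $\gcd(n, r!)$ is $\min(1, v_{p_i}(r!))$, which equals $1$ precisely when $p_i \leq r$ and $0$ otherwise. Hence
\[
\gcd(n, r!) = \prod_{\substack{1 \leq i \leq k \\ p_i \leq r}} p_i .
\]

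It then remains to read off the two inequalities defining a non-trivial divisor. From $p_1 \leq r$ the index $i = 1$ contributes, so the product above contains at least the factor $p_1 \geq 2$, giving $q = \gcd(n, r!) > 1$. From $r < p_k$ the prime $p_k$ is strictly excluded from the product, so $p_k \nmid q$ while $p_k \mid n$; therefore $q \neq n$ and $q < n$. Combining, $1 < q < n$ and $q \mid n$, which is exactly the claim.

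The step that carries all the weight is the strict upper bound $r < p_k$ supplied by \cref{proof:divisorrootbounds}: without strictness one could only conclude $r \leq p_k$, which would fail to guarantee that $p_k$ is omitted and hence would not force properness of the divisor. So the real content of the theorem lives in that lemma, and the present argument is essentially the observation that $r!$ acts as a sieve separating the smallest prime factor (captured) from the largest (missed). I would also flag the mild hypothesis bookkeeping — squarefreeness is what collapses the valuations to the clean product formula above, and compositeness ($k \geq 2$) is what makes ``omit $p_k$'' still leave a nonempty product, so that $q$ is simultaneously $>1$ and $<n$.
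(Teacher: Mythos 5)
Your proof is correct and takes essentially the same approach as the paper's: both arguments rest entirely on \cref{proof:divisorrootbounds} to obtain $p_1 \leq \floor{\sqrt[\omega(n)]{n}} < p_k$ and then conclude that $q = \gcd(n, \floor{\sqrt[\omega(n)]{n}}!)$ is a non-trivial divisor. The only difference is that you make explicit the valuation bookkeeping (that $p \mid r!$ iff $p \leq r$, and that squarefreeness gives $\gcd(n,r!) = \prod_{p_i \leq r} p_i$) which the paper compresses into ``it follows immediately.''
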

\begin{proof}
Consider $n = p_1 p_2 \cdots p_k$ which is squarefree and the product of $k$ distinct primes. Now, let
\begin{align*}
r = \floor{\sqrt[\omega(n)]{n}} .
\end{align*}

By \cref{proof:divisorrootbounds}, we know that
\begin{align*}
   \min(p_1,\ldots,p_k) \leq r < \max(p_1,\ldots,p_k) .
\end{align*}
It follows immediately that
\begin{align*}
q = \gcd(n, r!) = \gcd(n, \floor{\sqrt[\omega(n)]{n}}!)
\end{align*}
is a non-trivial divisor of $n$.
\end{proof}

\begin{theorem} \label{proof:divisor1}
Let $n \in \mathbb{N}$ such that $n>1$ is composite. Then a non-trivial divisor of $n$ is given by:
\begin{align*}
q = \gcd\left( \frac{n}{\chi(n)}, \floor{\sqrt[\omega(n)]{n}}! \right) :=T(n) .
\end{align*}
\end{theorem}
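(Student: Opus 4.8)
The plan is to show that $q \mid n$ together with the two inequalities $1 < q < n$, where $q = \gcd\!\left(n/\chi(n),\ \floor{\sqrt[\omega(n)]{n}}!\right)$. First I would record the structure of $\chi(n)$: writing $n = p_1^{a_1}\cdots p_k^{a_k}$ with $p_1 < \cdots < p_k$ and $k = \omega(n)$, one has $\chi(n) = \prod_i p_i^{\floor{a_i/2}}$, so $\chi(n) \mid n$ and $n/\chi(n) = \prod_i p_i^{\ceil{a_i/2}}$. The crucial observation is that every exponent $\ceil{a_i/2} \ge 1$, so $n/\chi(n)$ has \emph{exactly} the same prime divisors as $n$; in particular $p_1 \mid (n/\chi(n))$. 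Since $q \mid (n/\chi(n)) \mid n$, the divisibility $q \mid n$ is immediate, and the whole problem reduces to the two inequalities.

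For the lower bound $q > 1$, I would show that the smallest prime $p_1$ divides $q$. Since each $p_i \ge p_1$ and each $a_i \ge 1$, we have $n \ge p_1 p_2 \cdots p_k \ge p_1^k$, hence $\sqrt[k]{n} \ge p_1$ and therefore $\floor{\sqrt[k]{n}} \ge p_1$, because $p_1$ is an integer not exceeding $\sqrt[k]{n}$. Consequently $p_1 \le \floor{\sqrt[\omega(n)]{n}}$, so $p_1 \mid \floor{\sqrt[\omega(n)]{n}}!$; combined with $p_1 \mid (n/\chi(n))$ this yields $p_1 \mid q$, so $q \ge p_1 > 1$.

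For the upper bound $q < n$, I would split on squarefreeness. If $n$ is squarefree then $\chi(n) = 1$ and $n/\chi(n) = n$, so the claim is exactly \cref{proof:sffactor}, whose proof uses \cref{proof:divisorrootbounds} to guarantee $\floor{\sqrt[k]{n}} < p_k$, so that $p_k \nmid q$ and hence $q \ne n$. If $n$ is not squarefree then $\chi(n) > 1$; since $\chi(n)^2 \mid n$ forces $\chi(n) \le \sqrt{n} < n$, we obtain the strict inequality $n/\chi(n) < n$, and then $q \mid (n/\chi(n))$ gives $q \le n/\chi(n) < n$ at once. Combining the two bounds in either case yields $1 < q < n$ with $q \mid n$, as required.

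The main obstacle is not a deep difficulty but the dual role played by the factor $\chi(n)$. In the squarefree case the division by $\chi(n)$ is inert, and the strict upper bound $q < n$ must be imported from the root estimate $\floor{\sqrt[k]{n}} < p_k$ supplied by \cref{proof:divisorrootbounds}; in the non-squarefree case that root estimate need not hold, but dividing by $\chi(n) > 1$ shrinks $n/\chi(n)$ strictly below $n$ and thereby secures the upper bound automatically. The one point to verify carefully is that this shrinkage never deletes a prime factor, so that the lower-bound argument through $p_1$ still applies; this is precisely the observation that $\ceil{a_i/2} \ge 1$ for every $i$, which keeps $\omega(n/\chi(n)) = \omega(n)$ and in particular retains $p_1$.
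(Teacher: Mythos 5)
Your proof is correct and takes essentially the same route as the paper's: the same case split on $\chi(n)=1$ versus $\chi(n)>1$, the same appeal to \cref{proof:sffactor} in the squarefree case, the same observation that $n/\chi(n)<n$ settles the upper bound otherwise, and the same exponent-parity argument showing $p_1 \mid n/\chi(n)$ combined with $n \ge p_1^{\omega(n)}$ to get $p_1 \mid q$. The only difference is organizational: you establish $q \mid n$ and $q>1$ uniformly across both cases, whereas the paper delegates the squarefree case entirely to \cref{proof:sffactor} and argues the lower bound only when $\chi(n)>1$.
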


\begin{proof}
    If $\chi(n) = 1$ then $n$ is square-free, the expression gets the form
    $$\gcd\left( n, \floor{\sqrt[\omega(n)]{n}}! \right)  $$
    and we apply  \cref{proof:sffactor}.  If $\chi(n) \neq 1$ then $n/\chi(n) < n$, so $q < n$. Also $n/\chi(n)$ is a divisor of $n$ by \cref{proof:chi}. We must show that $q \neq 1$. Let $p_1$ be the smallest prime that divides $n$. The exponent of $p_1$ in the prime-number decomposition of $n$ is $2k+b$ with $b \in \{0,1\}$ such that if $b = 0$ then $k \geq 1$.
    The exponent of $p_1$ in $\chi(n)$ is $k$. The exponent of $p_1$ in $n/\chi(n)$ is $k+b \geq 1$, so
    $$p_1 \mid \frac{n}{\chi(n)}.$$
    Now we look at the number $\floor{\sqrt[\omega(n)]{n}}$. Denote $\omega(n)$ with $\omega$. Then:
    $$n = p_1^{\alpha_1}\dots p_\omega^{\alpha_\omega} $$
    where all $\alpha_i \geq 1$. It follows:
    $$n \geq p_1^\omega,$$
    $$\floor{\sqrt[\omega(n)]{n}} \geq p_1,$$
    $$p_1 \mid \floor{\sqrt[\omega(n)]{n}} !$$
    So $p_1 \mid q$, which implies that $q \neq 1$.
\end{proof}

\begin{remark} \label{proof:divisor} \rm
Let $n \in \mathbb{N}$ such that $n>1$ is composite. Then a non-trivial divisor of $n$ is also given by:
\begin{align*}
q = (2 \dot{-} \chi(n)) \cdot \gcd(n, \floor{\sqrt[\omega(n)]{n}}!) + (1 \dot{-} (2 \dot{-} \chi(n))) \cdot \chi(n) :=U(n).
\end{align*}
\end{remark}
\begin{proof}
This remark is another way to aggregate the results given in \cref{proof:sffactor} and \cref{proof:chi}. Following Marchenkov \cite{marchenkov2007superposition}, we use a somewhat artificial feature of the modified difference:
$$a \dot{-} b = 0 \iff a \leq b.$$
So $2 \dot{-} \chi(n) = 1$ if and only if $\chi(n) = 1$ if and only if $n$ is square-free; otherwise $2 \dot{-} \chi(n) = 0$. Also, $1 \dot{-} (2 \dot{-} \chi(n)) = 1$ if and only if $\chi(n) > 1$ if and only if $n$ is divisible by nontrivial squares, otherwise $1 \dot{-} (2 \dot{-} \chi(n)) = 0$.
\end{proof}

As the functions $\gcd(a, b)$, $\chi(n)$, $\omega(n)$, $\floor{\sqrt[m]{n}}$ and $k!$ are all represented by arithmetic terms, our both formulas for a divisor of $n$ are arithmetic terms.

To compute some examples,
$$T(10) = \gcd\left( \frac{10}{\chi(10)}, \floor{\sqrt[2]{10}}! \right)
= \gcd\left( \frac{10}{1}, \floor{\sqrt[2]{10}}! \right) = \gcd(10, 6) = 2,$$
$$T(20) = \gcd\left( \frac{20}{\chi(20)}, \floor{\sqrt[2]{20}}! \right)
= \gcd\left( \frac{20}{2}, \floor{\sqrt[2]{20}}! \right) = \gcd(10, 24) = 2,$$
$$T(50) = \gcd\left( \frac{50}{\chi(50)}, \floor{\sqrt[2]{50}}! \right)
= \gcd\left( \frac{50}{5}, \floor{\sqrt[2]{50}}! \right) = \gcd(10, 5040) = 10.$$

\section{Appendix: More details about the formulas}\label{sectionmoredetails}

For the function $\chi(n)$, one has:
\begin{align*}
\forall n \in \Z^+, \quad \chi(n) = \frac{\HW(M(f, n, t, u))}{u} - t^2 ,
\end{align*}
where  $t = n$, $u = n+4$, and
\begin{align*}
M(f, n, t, u) = \mathcal{C}_2(0, t, u)
+ \mathcal{A}_2(x_1^4, t, u)
+ \mathcal{A}_2(-2n x_1^2 x_2, t, u)
+ \mathcal{A}_2(n^2 x_2^2, t, u) .
\end{align*}
Indeed,
$$(x^2 - ny)^2 = x^4 - 2n x^2y + n^2y^2.$$
For example,
$$ \mathcal{A}_2(x_1^4, t, u) = - (2^u-1)G_4(2^{2u}, t) G_0(2^{2ut},t) = - (2^{n+4} -1)
G_4(2^{2(n+4)}, n) G_0(2^{2n(n+4)}, n).$$

For the function $\omega(n)$, one has:

\begin{align*}
\forall n \in \Z^+, \quad
\omega(n) = \nu_2\left( \frac{\HW(M(g, 4n, t, u))}{u} - t^2 \right) - 1 .
\end{align*}
where $t=4n$, $u = 4n+4$, and
\begin{align} \label{TermM}
\begin{array}{llll}
M(g, n, t, u)
&= \CTerm_2(1, t, u)
&+ \mathcal{A}_2(x_1^4, t, u)
&+ \mathcal{A}_2(-2 x_1^2, t, u) \\
&+ \mathcal{A}_2(-2n x_1^2 x_2, t, u)
&+ \mathcal{A}_2(n^2 x_2^2, t, u)
&+ \mathcal{A}_2(2n x_2, t, u) .
\end{array}
\end{align}
Indeed,
$$(x^2 - ny - 1)^2 = n^4 + n^2y^2 + 1 -2nx^2y +2ny -2x^2.$$
For example,
$$\mathcal{A}_2(-2n x_1^2 x_2, t, u) = 2n (2^u -1)
G_2(2^{2u}, t) G_1(2^{2ut},t) = 2n (2^{4n+4} - 1) G_2(2^{8(n+1)}, 4n) G_1(2^{16n(n+1)},4n).$$

\begin{lemma}
\label{proof:rootsprunescusauras}
The function $ \left\lfloor \sqrt[m]{n} \right\rfloor $ (for integer arguments $ n \geq 1 $ and $ m \geq 2 $) can be represented by the arithmetic term $$ \HW ( M ( m , n ) ) / ( 2^{nm^2 + nm + 2} + 2 (nm^2 + nm) + 9 ) - 2^{7 (nm^2 + nm + 1) } - 1 , $$ where $ M ( m , n ) $ is equal to $$ \begin{array}{llll} \mathcal{C}_7 ( 2 + n^2 , t, u) & + \mathcal{A}_7( - 2 m x_1 x_2 , t, u) & + \mathcal{A}_7( ( m^2 + 1 ) x_1^2 , t, u) & + \mathcal{A}_7( - x_4 2^{x_1 + 1} , t, u) \\ + \mathcal{A}_7( - 2 x_1 , t, u) & + \mathcal{A}_7( - 2 ( m + 1 ) x_1 x_7 , t, u) & + \mathcal{A}_7( x_2^2 , t, u) & + \mathcal{A}_7( - x_5 2^{x_1 + 1} , t, u) \\ + \mathcal{A}_7( 2 x_5 , t, u) & + \mathcal{A}_7( 2 x_4 x_5 , t, u) & + \mathcal{A}_7( 3 x_4^2 , t, u) & + \mathcal{A}_7( - x_7 2^{x_1 + 1} , t, u) \\ + \mathcal{A}_7( 2 ( 1 - n ) x_4 , t, u) & + \mathcal{A}_7( 2 x_4 x_6 , t, u) & + \mathcal{A}_7( x_5^2 , t, u) & + \mathcal{A}_7( - x_4 2^{x_2 + 1} , t, u) \\ + \mathcal{A}_7( - 2 n x_6 , t, u) & + \mathcal{A}_7( 2 x_4 x_7 , t, u) & + \mathcal{A}_7( x_6^2 , t, u) & + \mathcal{A}_7( x_3 x_4 2^{x_1 + 1} , t, u) \\ + \mathcal{A}_7( 2 ( m + 2 ) x_7 , t, u) & + \mathcal{A}_7( 2 x_5 x_7 , t, u) & + \mathcal{A}_7( ( m^2 + 2 m + 2 ) x_7^2 , t, u) & + \mathcal{A}_7( x_3 x_7 2^{x_2 + 1} , t, u) \\ + \mathcal{A}_7( - 2 x_3 x_4 x_7 , t, u) & + \mathcal{A}_7( - 2^{x_1 + 1} , t, u) & + \mathcal{A}_7( 2^{2 x_1} , t, u) & + \mathcal{A}_7( - x_3^2 x_7 2^{x_1 + 1} , t, u) \\ + \mathcal{A}_7( x_3^2 x_7^2 , t, u) & + \mathcal{A}_7( x_3^2 2^{2 x_1} , t, u) & + \mathcal{A}_7( 2^{2 x_2} , t, u) & + \mathcal{A}_7( - x_3 2^{x_1 + x_2 + 1} , t, u) . \end{array} $$
\end{lemma}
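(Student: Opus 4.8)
The plan is to read this statement as the explicit output of the hypercube method (\cref{proof:numberofsolutions}) applied to a seven-variable, sum-of-squares encoding of the relation $x^m \le n$, building on the counting identity $\floor{\sqrt[m]{n}} + 1 = |\{x \in [0,n] : x^m \le n\}|$ already recorded in \cref{sectionmthrootofn}. First I would fix the exponential Diophantine representation. Starting from Marchenkov's identity $x^m = 2^{(xm+x+1)m} \bmod (2^{xm+x+1} - x)$, I would write the underlying division with remainder as a quotient/remainder pair, encode the remainder bound, and express the inequality $x^m \le n$ by a slack variable via $n = r + y$ with $y \ge 0$. Introducing auxiliary variables $x_1, x_2$ to carry the two exponents $xm+x+1$ and $(xm+x+1)m$ as powers of the fixed base $2$ (so that every resulting monomial is \emph{simple} in the sense of \cref{sectionpreliminaries}), and collecting the quotient, remainder and slack into the remaining coordinates, one obtains a sum of squares $f(m,n,x_1,\dots,x_7)$. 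The essential structural point to verify is that this representation is \emph{single-fold}: for each $x$ with $x^m \le n$ the other six coordinates are forced, so the number of zeros of $f$ in the counting box is exactly $\floor{\sqrt[m]{n}} + 1$.

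Next I would determine the two parameters of the method. The largest solution coordinate is a quotient-type variable $x_3$, of size comparable to $2^{(xm+x+1)(m-1)}$; a uniform box bound $t(m,n) = 2^{nm^2+nm+1}$ is then (loosely) large enough to contain all solutions in $[0,t]^7$. For the bit bound I would majorize $f$ over the \emph{entire} box rather than only at its solutions. The dominant monomial has the shape $x_3^2\, 2^{2x_1}$, a squared quotient-type variable times a squared power of two: the squared quotient contributes bit length $2\log_2 t = 2(nm^2+nm)+2$, while $2^{2x_1}$, with $x_1$ free to range up to $t$, contributes the doubly exponential bit length $2t = 2^{nm^2+nm+2}$. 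Adding these together with a constant slack for the remaining summands yields exactly $u(m,n) = 2^{nm^2+nm+2} + 2(nm^2+nm) + 9$, confirming hypotheses (i)--(iii).

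With $f$, $t$, and $u$ in hand the term is essentially read off. Expanding $f$ into monomials, each monomial $\mu(\vec x)$ contributes one summand $\mathcal{A}_7(\mu(\vec x), t, u)$ through \eqref{TermA}, while the genuine constant term (which here equals $2 + n^2$, coming from the $n^2$ in the slack equation $n = r+y$ together with the free units produced by the bound-encoding squares) contributes $\mathcal{C}_7(2+n^2, t, u)$ through \eqref{TermC}. Their sum is precisely the displayed $M(m,n)$. Applying \cref{proof:numberofsolutions} then gives that $\HW(M(m,n))/u - t^7$ counts the solutions, hence equals $\floor{\sqrt[m]{n}} + 1$; subtracting $1$ and substituting $t^7 = 2^{7(nm^2+nm+1)}$ yields the claimed closed form.

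I expect the main obstacle to be the bound bookkeeping of the second step rather than any conceptual difficulty. One must check simultaneously that every auxiliary coordinate -- quotient, remainder, the two exponent variables, and the slack -- stays below $t(m,n)$ at each solution, and that the worst-case value of each squared binomial remains below $2^{u(m,n)}$ across the \emph{entire} box, so that the precise exponents $nm^2+nm+1$ and $nm^2+nm+2$ and the additive constants $2(nm^2+nm)+9$ and $2+n^2$ all come out exactly. A secondary, purely mechanical check is the faithful monomial-by-monomial matching of the expansion of $f$ against each $\mathcal{A}_7$ entry and the single $\mathcal{C}_7$ entry of $M(m,n)$, verifying that no simple term has been dropped, duplicated, or assigned the wrong coefficient.
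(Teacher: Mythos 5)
Your proposal is correct and matches the paper's approach: this lemma is precisely the term of Prunescu and Sauras-Altuzarra \citep[Theorem 8.3]{prunescu2024numbertheoreticfunctions}, which the paper recalls without reproving, and it arises exactly as you reconstruct it --- the hypercube method of \cref{proof:numberofsolutions} applied to a single-fold, seven-variable sum-of-squares encoding of $x^m \leq n$ built on Marchenkov's identity, with box bound $t(m,n)=2^{nm^2+nm+1}$ and bit bound $u = 2^{nm^2+nm+2}+2(nm^2+nm)+9$. Your identification of the variable roles (exponents $x_1 = x_7 m + x_7 + 1$ and $x_2 = m x_1$, quotient $x_3$, remainder $x_4 = x_7^m$, two slack variables $x_5, x_6$), of the dominant monomial $x_3^2\, 2^{2x_1}$ governing $u$, and of the free term $2+n^2$ (the $n^2$ from the slack square plus the two unit constants) all agree with the displayed monomial list.
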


\section{Appendix: Source code} \label{sectionsourcecode}

This appendix provides SageMath code for implementing and verifying the arithmetic terms developed throughout this paper. The code demonstrates the construction of $\chi(n)$ (the largest square divisor), $\omega(n)$ (the number of distinct prime divisors), and the main factoring terms $T(n)$ and $U(n)$ from \cref{sectionfactoringterm}.

The implementation follows a step-wise approach, building up the components systematically:
\begin{enumerate}
\item Helper functions implementing the hypercube method (generalized geometric series $G_r(q,t)$, arithmetic term constructors $\mathcal{A}$ and $\mathcal{C}$)
\item Arithmetic terms for $\chi(n)$ (\cref{proof:chi}) and $\omega(n)$ (\cref{proof:omegasolutions})
\item Supporting terms for binomial coefficients, factorial, and greatest common divisor
\item The complete factoring terms $T(n)$ (\cref{proof:divisor1}) and $U(n)$ (\cref{proof:divisor})
\item Verification tests demonstrating that both terms successfully produce proper divisors for composite integers
\end{enumerate}

The arithmetic term formulas presented in this paper are computationally infeasible for large $n$ in practice, so the purpose of this code is merely to provide concrete verification that the constructions are mathematically sound. To make verifications practical, the code uses SageMath's built-in functions where appropriate (e.g. for factorial and GCD operations), as the full arithmetic term implementations become intractable even for moderate values. However, the arithmetic term versions of these functions (see \cref{sectionpreliminaries}) are also included and can be easily substituted by modifying the code.

{\fontsize{8}{9}\selectfont
\begin{verbatim}
    from sage.all import *

    def print_values(values, title=''):
        """Print values in a comma-separated format."""
        if title != '': print(f'{title}:')
        for v in values: print(f'{v}', end=',')
        print('')
    
    def nu2(a):
        """2-adic valuation of a."""
        if a == 0: return +Infinity
        return Integer(a).valuation(2)
    
    def HW(a):
        """Hamming weight (number of 1s in binary representation)."""
        return bin(Integer(a)).count('1')
    
    def G0(q, t):
        """G_0(q,t): geometric series."""
        if q == 1: return t
        return (q**t - 1) / (q - 1)
    
    def G1(q, t):
        """G_1(q,t): weighted geometric series with j."""
        if q == 1: return t*(t-1)/2
        t1 = t - 1
        return q * (t1 * q**t - t * q**t1 + 1) / (q - 1)**2
    
    def G2(q, t):
        """G_2(q,t): weighted geometric series with j^2."""
        if q == 1: return t*(t-1)*(2*t-1)/6
        t1 = t - 1
        return q * (
            t1**2 * q**(t1 + 2)
            - (t1**2 * 2 + t1 * 2 - 1) * q**(t1 + 1)
            + (t1 + 1)**2 * q**t1 - q - 1
        ) / (q - 1)**3
    
    def G4(q, t):
        """G_4(q,t): weighted geometric series with j^4."""
        if q == 1:
            s = sum(j**4 for j in range(t))
            return s
        t1 = t - 1
        return q * (
            t1**4 * q**(t1 + 4)
            + (t1**4 * (-4) - 12 * t1**3 - 6 * t1**2 + t1 * 12 + 11) * q**(t1 + 1)
            + (t1**4 * 6 + 12 * t1**3 - 6 * t1**2 - t1 * 12 + 11) * q**(t1 + 2)
            + (t1**4 * (-4) - 4 * t1**3 + 6 * t1**2 - t1 * 4 + 1) * q**(t1 + 3)
            + (t1 + 1)**4 * q**t1 - q**3 - q**2 * 11 - q * 11 - 1
        ) / (q - 1)**5
    
    def G(r, q, t):
        """Generalized geometric series G_r(q,t) = sum_{j=0}^{t-1} j^r * q^j."""
        if r == 0: return G0(q, t)
        elif r == 1: return G1(q, t)
        elif r == 2: return G2(q, t)
        elif r == 4: return G4(q, t)
        else:
            # Fallback for other powers
            g = QQ(0)
            qj = QQ(1)
            for j in range(t):
                g += qj * j**r
                qj *= q
            return g
    
    def C(a, k, t, u):
        """Free term C_k(a, t, u)."""
        return ((2**u - a + 1) * (2**(u * 2 * t**k) - 1)) / (2**u + 1)
    
    def A(a, P, B, V, k, t, u):
        """Arithmetic term A_k for monomial from equation.
        
        Parameters:
        - a: coefficient
        - P: list of powers [r_1, ..., r_k]
        - B: list of bases (usually [2,2,...])
        - V: list of exponential powers for bases
        - k: dimension
        - t, u: parameters
        """
        result = -(2**u - 1) * a
        for i in range(k):
            q = B[i]**V[i] * 2**(u * t**i * 2)
            g = G(P[i], q, t)
            result *= g
        return result
    
    def M_chi(n):
        """Construct M(f, n, n, n+4) for chi function.
        
        Based on f(x,y) = (x^2 - ny)^2 = x^4 - 2n*x^2*y + n^2*y^2
        """
        k = 2
        t = n
        u = n + 4
        B = [2, 2]  # Bases
        V = [0, 0]  # Exponential powers for bases
        
        return (C(0, k, t, u)
                + A(1, [4, 0], B, V, k, t, u)           # x^4
                + A(-2*n, [2, 1], B, V, k, t, u)        # -2n*x^2*y
                + A(n**2, [0, 2], B, V, k, t, u))       # n^2*y^2
    
    def Chi(n):
        """Chi(n): largest s such that s^2 divides n."""
        if n == 0: return 0
        M = M_chi(n)
        return Integer(HW(M) / (n + 4) - n**2)
    
    def M_omega(n):
        """Construct M(g, 4n, 4n, 4n+4) for omega function.
        
        Based on g(x,y) = (x^2 - ny - 1)^2 = x^4 - 2x^2 - 2n*x^2*y + n^2*y^2 + 2ny + 1
        """
        k = 2
        t = n
        u = n + 4
        B = [2, 2]
        V = [0, 0]
        
        return (C(1, k, t, u)
                + A(1, [4, 0], B, V, k, t, u)           # x^4
                + A(-2, [2, 0], B, V, k, t, u)          # -2x^2
                + A(-2*n, [2, 1], B, V, k, t, u)        # -2n*x^2*y
                + A(n**2, [0, 2], B, V, k, t, u)        # n^2*y^2
                + A(2*n, [0, 1], B, V, k, t, u))        # 2ny
    
    def Omega(n):
        """Omega(n): number of distinct prime divisors of n."""
        if n == 0 or n == 1: return 0
        M = M_omega(4*n)
        N = HW(M) / (4*n + 4) - (4*n)**2
        return nu2(Integer(N)) - 1
    
    def Binomial_term(a, b):
        """Binomial coefficient using arithmetic term formula."""
        if b > a or b < 0: return 0
        numerator = 2**(2*(a+2)*((a+1)**2 + b + 1))
        denominator = 2**(2*(a+2)**2) - 2**(2*(a+2)) - 1
        return Integer(floor(numerator / denominator) % 2**(2*(a+2)))
    
    def Factorial_term(n):
        """Factorial using arithmetic term formula."""
        if n == 0: return 1
        binom = Binomial_term(8*n**2, n)
        return Integer(floor(8**(n**3) / binom))
    
    def Gcd_term(a, b):
        """GCD using arithmetic term formula."""
        if a == 0: return b
        if b == 0: return a
        numerator = 2**(a*b*(a*b + a + b))
        denominator = (2**(a**2 * b) - 1) * (2**(a*b**2) - 1)
        return Integer((floor(numerator / denominator) % 2**(a*b)) - 1)
    
    def FloorRoot_simple(m, n):
        """Simplified m-th root using native computation.
        
        Note: The full arithmetic term implementation is extremely
        complex. For verification purposes, we use Sage's built-in computation.
        """
        if n == 0: return 0
        if m == 0: return 1
        return Integer(floor(n**(1/m)))
    
    def T(n):
        """Main factoring term T(n).
        
        T(n) = gcd(n/chi(n), floor(omega(n)-th root of n)!)
        
        For composite n, returns a non-trivial divisor.
        """
        if n <= 1: return n
        
        chi_n = Chi(n)
        if chi_n == 0: chi_n = 1  # Avoid division by zero
        
        omega_n = Omega(n)
        if omega_n == 0: omega_n = 1  # Handles prime case
        
        # Compute floor(omega(n)-th root of n)
        root_val = FloorRoot_simple(omega_n, n)
        fact_val = factorial(root_val)  # Use built-in factorial for practical computation
        
        # Compute GCD
        arg1 = Integer(n / chi_n)
        result = gcd(arg1, fact_val)  # Use built-in gcd for practical computation
        
        return result
    
    def U(n):
        """Alternative factoring term U(n).
        
        U(n) = (2 - chi(n)) * gcd(n, floor(omega(n)-th root of n)!)
               + (1 - (2 - chi(n))) * chi(n)
               
        Uses truncated difference (monus operation).
        """
        if n <= 1: return n
        
        chi_n = Chi(n)
        omega_n = Omega(n)
        if omega_n == 0: omega_n = 1
        
        root_val = FloorRoot_simple(omega_n, n)
        fact_val = factorial(root_val)  # Use built-in factorial for practical computation
        gcd_val = gcd(n, fact_val)
        
        # Monus operation: a - b if a >= b else 0
        monus1 = max(2 - chi_n, 0)
        monus2 = max(1 - monus1, 0)
        
        result = monus1 * gcd_val + monus2 * chi_n
        return Integer(result)
    
    print("="*70)
    print("Testing Chi(n): Largest square divisor")
    print("="*70)
    n_values = list(range(1,32))
    print_values(n_values, 'n')
    print_values([Chi(n) for n in n_values], 'Chi(n)')
    print()
    
    print("="*70)
    print("Testing Omega(n): Number of distinct prime divisors")
    print("="*70)
    n_values = list(range(1, 32))
    print_values(n_values, 'n')
    print_values([Omega(n) for n in n_values], 'Omega(n)')
    print()
    
    print("="*70)
    print("Testing T(n): Main factoring term for composite numbers")
    print("="*70)
    composite_values = [Integer(n) for n in range(2, 50) if not Integer(n).is_prime()]
    for n in composite_values:
        t_n = T(n)
        # Verify it's a proper divisor
        is_proper = 1 < t_n < n
        factors = f"{t_n} x {n//t_n}" if t_n != 0 and n % t_n == 0 else "error"
        status = "Pass" if is_proper else "Fail"
        print(f"n={n:3d}: T(n)={t_n:3d}, factors: {factors:12s} {status}")
    print()
    
    print("="*70)
    print("Testing U(n): Alternative factoring term")
    print("="*70)
    for n in composite_values:
        u_n = U(n)
        is_proper = 1 < u_n < n
        factors = f"{u_n} x {n//u_n}" if u_n != 0 and n % u_n == 0 else "error"
        status = "Pass" if is_proper else "Fail"
        print(f"n={n:3d}: U(n)={u_n:3d}, factors: {factors:12s} {status}")
    print()
\end{verbatim}
}



\end{document}